\newcommand{\tsk}[1]{\textcolor{YellowOrange}}
\def\@endtheorem{\endtrivlist}% NEW
\newtheorem{teo}{Theorem}[section]
\newtheorem{problem}{Problem}[section]
\newtheorem{definition}[teo]{Definition}
\newtheorem{prop}[teo]{Proposition}
\newtheorem{cor}[teo]{Corollary}
\newtheorem{lemma}[teo]{Lemma}
\newtheorem{conjecture}[teo]{Conjecture}
\theoremstyle{definition}
\newtheoremstyle{dico}% name of the style to be used
 {\baselineskip}   % ABOVESPACE
  {\topsep}   % BELOWSPACE
  {}  % BODYFONT
  {0pt}       % INDENT (empty value is the same as 0pt)
  {} % HEADFONT
  {.}         % HEADPUNCT
  {5pt plus 1pt minus 1pt} % HEADSPACE
  {}          % CUSTOM-HEAD-SPEC
\theoremstyle{dico}
\newtheorem{say}[teo]{}
\numberwithin{equation}{section}
\newcommand{\ra}{\rightarrow}
\newcommand{\C}{\mathbb{C}}
\newcommand{\R}{\mathbb{R}}
\newcommand{\Zeta}{{\mathbb{Z}}}
\newcommand{\N}{{\mathbb{N}}}
\newcommand{\meno}{^{-1}}
\newcommand{\alfa}{\alpha}
\newcommand{\alf}{\alpha}
\newcommand{\vacuo}{\emptyset}
\newcommand{\enf}{\emph}
\newcommand{\Aut}{\operatorname{Aut}}
\newcommand{\End}{\operatorname{End}}
\newcommand{\ord}{{\operatorname{ord}}}
\newcommand{\eps}{\varepsilon}
\renewcommand{\phi}{\varphi}
\newcommand{\lds}{\dotsc}
\newcommand{\cds}{\cdots}
\newcommand{\cd}{\cdot}
\newcommand{\sx}{\langle}
\newcommand{\xs}{\rangle}
\newcommand{\lra}{\longrightarrow}
\newcommand{\ga}{\gamma}
\newcommand{\Ga}{\Gamma}
\newcommand{\fix}{\operatorname{Fix}}
\newcommand{\Gl}{\operatorname{GL}}
\newcommand{\PP}{\mathbb{P}}   
\renewcommand{\phi}             {\varphi}
\newcommand{\M}{\mathsf{M}}
\newcommand{\T}{\mathsf{T}}
\newcommand{\Mg}{\mathsf{M}_g}
\newcommand{\Datum}{\Delta}
\newcommand{\Da}{\Delta}
\newcommand{\td}{\mathsf{T}_\Datum}
\newcommand{\md}{\mathsf{M}_\Datum}
\newcommand{\tmd}{\tilde{\mathsf{M}}_\Datum}
\newcommand{\ttmd}{\mathsf{B}_\Datum}
\newcommand{\zd}{\mathsf{Z}_\Datum}
\newcommand{\cc}{\mathscr{C}}
\newcommand{\A}{\mathsf{A}}
\newcommand{\Ag}{\mathsf{A}_g}
\newcommand{\mm}{{\mathbf{m}}}
\newcommand{\tor}{\overline{j(\mg)}} %{\mathsf{T}_g}
\newcommand{\ag}{\mathsf{A}_g}
\newcommand{\mg}{\mathsf{M}_g}
\newcommand{\datum}{{( G, \theta)}}
\newcommand{\mihi}[1]{}
\newcommand{\prym}{\mathscr{P}}
\newcommand{\Nm}{\operatorname{Nm}}
\newcommand{\Mod}{\operatorname{Mod}}
\newcommand{\gtheta}{{G_\theta}}
\newcommand{\magma}{{\texttt{MAGMA} \cite{MA}}}
\newcommand{\normale}{\vartriangleleft}
\newcommand{\barc}{\bar{\chi}}
\newcommand{\baralf}{\bar{\alf}}
\newcommand{\data}{\mathscr{D}}
\newcommand{\datar}{\mathscr{D}^r}
\newcommand{\abs}[1]{\left\vert#1\right\vert}
\newcommand{\irr}{\operatorname{Irr}}
\newcommand{\bart}{\bar{\theta}}
\newcommand{\dd}{{\bar{\Delta}}}
\newcommand{\ttmdd}{\mathsf{B}_\dd}
\newcommand{\muu}{m}
\newcommand{\Ab}{\operatorname {Ab}}
\begin{document}

\author{Diego Conti, Alessandro Ghigi, Roberto Pignatelli}
\title%[Coleman-Oort conjecture]
{Some evidence for the Coleman-Oort conjecture}

\address{Universit\`{a} di Milano - Bicocca}
\email{diego.conti@unimib.it}

\address{Universit\`{a} di Pavia}
\email{alessandro.ghigi@unipv.it}

\address{Universit\`{a} di Trento}
\email{Roberto.Pignatelli@unitn.it}

\subjclass[2020]{Primary: 14G35, %modular and Shimura varieties
  % 14H15, % Families, moduli (analytic)
  14J10, %Families, moduli (algebraic)
  % 14H40, % Prym jacobians
  14Q05, % computational aspects of algebraic curves
  % 32G20 Period matrices, variation of Hodge structure; degenerations and
  % 14K22 Complex multiplication  (secondary)
Secondary:  20F99, %  None of the above, but in this section
   %section = Special aspects of infinite or finite groups
 }

 \thanks{
The authors were partially supported by  INdAM (GNSAGA).
   The second author was partially supported also by MIUR PRIN 2015
  ``Moduli spaces and Lie Theory'' , by MIUR FFABR, by FAR 2016
  (Pavia) ``Variet\`a algebriche, calcolo algebrico, grafi orientati e
  topologici'', by MIUR, Programma Dipartimenti di Eccellenza
  (2018-2022) - Dipartimento di Matematica ``F. Casorati'',
  Universit\`a degli Studi di Pavia. The third author was partially supported also by MIUR PRIN 2015
  ``Geometry of Algebraic Varieties''  and by MIUR PRIN 2017
  ``Moduli Theory and Birational Classification''.}

\maketitle

\begin{abstract}
  The Coleman-Oort conjecture says that for large $g$ there are no
  positive-dimensional Shimura subvarieties of $\mathsf{A}_g$
  generically contained in the Jacobian locus. Counterexamples are
  known for $g\leq 7$.  They can all be constructed using families of
  Galois coverings of curves satisfying a numerical condition.  These
  families are already classified in cases where: a)~the Galois group
  is cyclic, b)~it is abelian and the family is 1-dimensional, and
  c)~$g\leq 9$. By means of carefully designed computations and
  theoretical arguments excluding a large number of cases we are able
  to prove that for $g\leq 100$ there are no other families than those
  already known.
\end{abstract}

\maketitle

\section{Introduction}

\begin{say}

  Denote by $\ag$ the moduli space of principally polarized complex
  abelian varieties of dimension $g$, by $\mg$ the moduli space of
  smooth complex algebraic curves of genus $g$ and by
  $j \colon \mg \ra \ag$ the period mapping (or Torelli mapping),
  which associated to $[C] \in \Mg$ the moduli point of the Jacobian
  variety $JC$ provided with the theta polarization.  The
  \emph{Jacobian locus} is the image $j (\mg)$.
  % The closure of $\jac$ in $\ag$ is denoted by $\tor$.
  By $\tor$ we denote the closure of $j(\mg)$ in $\ag$.
  
  On $\ag$ there is a tautological $\mathbb{Q}$-variation of the Hodge
  structure (in the orbifold sense): if $A$ is a principally polarized
  abelian variety, the fibre over its moduli point $[A] \in \Ag$ is
  $H^1(A, \mathbb{Q})$ with its Hodge structure of weight 1.  In
  general, given a variation of the Hodge structure $H \ra B$, it is
  interesting to consider the points $b \in B$ where the Hodge
  structure is ``more symmetric'' than over the general point. Making
  precise the meaning of ``more symmetric'' requires some effort. In
  the simplest case this means that the Hodge structure has more
  automorphism than usual. For example for the variation over $\A_1$,
  the general point has no automorphisms beyond $\{\pm 1\}$, while the
  points with more automorphisms represent the well-known elliptic
  curves with automorphisms $\Zeta/4\Zeta$ or $\Zeta/6\Zeta$.
  The general case is more complicated since the symmetry is not at
  the level of automorphisms but is detected by Hodge classes in
  general tensor spaces. The loci obtained in this way are called the
  \emph{Hodge loci} of the variation of the Hodge structure. In the
  case of $\Ag$ they are also called \emph{special subvarieties} or
  \emph{Shimura subvarieties}. (See \cite[\S 3.3]{moonen-oort} and
  \cite{deba}.)  A subvariety $Z \subset \A_g$ is said to be
  \emph{generically contained} in $j(\Mg)$ if $Z \subset \tor$ and
  $Z \cap j(\Mg)\neq \vacuo$.  Arithmetical considerations led first
  Coleman and later Oort \cite{oort-can} to the following
\end{say}
\begin{conjecture}[Coleman-Oort]
  For large $g$ there are no special subvarieties of positive
  dimension generically contained in $j(\Mg)$.
\end{conjecture}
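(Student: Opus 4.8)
The plan is to attack the conjecture through the differential geometry of the Torelli map, following the circle of ideas built around its second fundamental form. The starting point is to reduce the arithmetic notion of a special subvariety to a purely metric one. Equip $\Ag=\Sp(2g,\Zeta)\backslash\siegg$ with the orbifold locally symmetric metric coming from the Siegel space $\siegg$. Every special subvariety $Z\subset\Ag$ is then totally geodesic for this metric, because it is the image of a sub-Shimura datum. Thus, if a positive-dimensional special $Z$ were generically contained in $j(\Mg)$ (so $Z\subset\tor$ and $Z\cap j(\Mg)\neq\emptyset$), I would pick a point $[C]=[JC]\in Z\cap j(\Mg)$ at which $j(\Mg)$ is a smooth submanifold and $Z\subset j(\Mg)$ locally, and the whole problem becomes: rule out, for large $g$, positive-dimensional totally geodesic submanifolds of $\Ag$ that lie inside the Jacobian locus and pass through a Jacobian.

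Next I would exploit the composition law for second fundamental forms applied to the chain $Z\subset j(\Mg)\subset\Ag$. Writing $\rho$ for the second fundamental form of the Torelli map $j$ at $[C]$, a symmetric map
\[
\rho\colon S^2\bigl(T_{[C]}\Mg\bigr)\lra N_{j(\Mg)/\Ag,\,[C]},
\]
the fact that $Z$ is totally geodesic in $\Ag$ forces the $N_{j(\Mg)/\Ag}$-component of its second fundamental form to vanish, i.e.
\[
\rho\big\vert_{S^2 W}=0,\qquad W:=T_{[C]}Z\subset T_{[C]}\Mg .
\]
Here $T_{[C]}\Mg=H^1(C,T_C)\cong H^0(C,2K_C)^*$ has dimension $3g-3$, the ambient tangent space is $S^2H^0(C,K_C)^*$, and for non-hyperelliptic $C$ the conormal space is $I_2(C)$, the space of quadrics through the canonical curve. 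By the Hodge--Gaussian computation of Colombo--Pirola--Tortora, pairing $\rho$ with a quadric $Q\in I_2(C)$ is, up to a universal constant, the second Gaussian map $\mu_2(Q)$. So the conjecture becomes a uniform non-degeneracy statement: for all $g$ large and all relevant curves $C$, there is no positive-dimensional subspace $W\subset H^0(C,2K_C)^*$ on which the pairing of every $Q\in I_2(C)$ vanishes.

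Finally I would try to establish that non-degeneracy by bounding $\dim W$ in terms of $g$ and forcing it to $0$. On a curve of general moduli the Clifford index is maximal, and one can hope to show, by a rank estimate on $\mu_2$ together with base-point-free pencil and Koszul-cohomology arguments, that $\rho$ is injective on symmetric squares of every positive-dimensional subspace once $g\gg 0$. The hard part---and the reason the statement is still a conjecture---is that this estimate must hold not at the generic curve but precisely at the special curves $C$ through which $Z$ passes, and these are exactly the curves with extra symmetry (for instance quotients of Galois covers) where $I_2(C)$ is large and $\mu_2$ degenerates. Controlling $\rho$ uniformly in $g$ along such highly symmetric loci is the genuine obstruction; absent a uniform lower bound on the rank of the second Gaussian map over these loci, the method yields only partial results---dimension bounds, or the full statement for $g$ in a bounded range---which is the sense in which one can presently supply evidence rather than a proof.
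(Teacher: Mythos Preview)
The statement is the Coleman--Oort \emph{conjecture}; it is open, and the paper does not prove it. The paper's contribution is computational evidence of a quite different kind: it verifies that for $2\leq g\leq 100$ no family of Galois covers of $\PP^1$ beyond those already known satisfies the numerical condition \eqref{star}, which is the only known mechanism for producing positive-dimensional Shimura subvarieties generically contained in the Jacobian locus. There is therefore no ``paper's proof'' against which to compare your proposal.

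Your outline via the second fundamental form $\rho$ of the Torelli map and the second Gaussian map is exactly the differential-geometric heuristic the paper itself cites in the introduction (see \cite{cfg}, \cite{gpt}, \cite{fpi}) as motivation for believing the conjecture, not as a proof of it. You locate the obstruction correctly: the vanishing $\rho|_{S^2W}=0$ must be excluded not at the generic curve but precisely at the highly symmetric curves through which a putative special $Z$ passes, and no uniform lower bound on the relevant rank over such loci is known. Since you yourself conclude that the method ``yields only partial results'' rather than a proof, what you have written is not a proof attempt but an honest summary of one line of attack and of where it currently stalls --- which is the right thing to say, because no proof exists. One small inaccuracy worth flagging: the difficulty at the symmetric curves is not that ``$I_2(C)$ is large and $\mu_2$ degenerates''; rather, when a finite group $G$ acts and \eqref{star} holds, the $G$-invariant part of $I_2(C)$ is zero (the map $m_C^G$ is an isomorphism for general $C$ in the family), so there are no invariant quadrics available to detect the invariant tangent directions $W$. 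The problem is a structural absence of test quadrics, not a size or degeneration phenomenon.
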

(See \cite[\S 4]{moonen-oort} for more details.)  This expectation is
also motivated by another stronger expectation originating from the point of view of
differential geometry: special subvarieties are totally geodesic with
respect to the locally symmetric (orbifold) metric on $\ag$ (the one
coming from the Siegel space). If one believes that $j(\mg)$ bears no
strong relation to the ambient geometry of $\ag$, in particular that
it is very curved inside $\ag$, then it is natural to expect that
$j(\Mg)$ contains generically no totally geodesic subvarieties, and in
particular no Shimura subvarieties (see %\cite{cpt},
\cite{cfg}, \cite{gpt}, \cite{fpi} for results in this direction).

What makes the problem more interesting is that for low genus examples
of such Shimura varieties generically contained in $j(\mg)$ do exist!
All the examples known so far are in genus $g \leq 7$ and arise from
one of the following two constructions.

\begin{say}
  \label{sayuno}
  \textbf{First construction}.  Let $G$ be a finite group acting on a
  curve $C$.  Consider the family of curves $\mathscr{C} \ra B$ with a
  $G$-action of the same topological type (see below for the precise
  definition).  For every $m$, $H^0(C_b,mK_{C_b})$ is a representation
  of $G$ and its equivalence class is independent of $b\in B$.  Denote
  by $B' \subset \M_g$ the moduli image of $B$ and by $Z$ the closure
  of $j(B')$ in $\A_g$.  In \cite{fgp,fpp} it is proven that if
  \begin{gather}
    \label{star}
    \tag{$\ast$} \dim (S^2 (H^0(K_{C_b})))^G = \dim H^0(2K_{C_b})^G,
  \end{gather}
  then $Z$ is a Shimura variety generically contained in $j(\mg)$.  We
  also say that the family of $G$-covers $\mathscr{C} \ra B$
  \emph{yields} a Shimura variety to mean that $Z$ is Shimura. We
  refer to such a Shimura variety as a \emph{counter-example} to
  Coleman-Oort conjecture.  Several counter-examples are known, see
  Theorem \ref{data} below.
\end{say}

\begin{say}
  \label{saydue}
  \textbf{Second construction}. Consider a Shimura variety $Z$
  generically contained in $j(\Mg)$ obtained as in \ref{sayuno} from a
  family of $G$-curves $\mathscr{C} \ra B$. Denote by $g'$ the genus
  of $C_b/G$. Let $\Nm: JC_b \ra J(C_b/G)$ be the norm map of the
  covering $f_b : C_b \ra C_b/G$, defined by
  $\Nm( \sum_i p_i):= \sum_i f_b(p_i)$, Then $(\ker \Nm)^0\subset JC_b$
  is an abelian subvariety, the generalized Prym variety of the
  covering $f_b$.  The theta polarization of $JC_b$ restricts to a
  polarization of some type $\delta$ on the Prym variety. We get maps
  \begin{gather*}
    \phi: B \lra \Mg, \quad   \phi(b):= [C_b/G] , \\
    \prym: B \lra \A_{g-g'}^\delta , \quad \prym (b):= [ (\ker \Nm)^0
    ] .
  \end{gather*}
  $\prym$ is the \enf{generalized Prym map}.  If $g'=0$ the map $\phi$
  is of course constant, $\A_{g-g'}^\delta = \Ag$ and $\prym $ is just
  the Torelli map, so we get nothing new.  If instead $g' > 0$, the
  irreducible components of the fibres of $\prym$ and $\phi$ are
  totally geodesic subvarieties and countably many of them are in fact
  Shimura, see \cite{gm2} and \cite[Thm. 3.9, Thm. 3.11] {fgs}.  Thus
  for $g'>0$ this construction gives uncountably many totally geodesic
  non-Shimura varieties generically contained in $j(\Mg)$ and
  countably many Shimura varieties generically contained in $j(\Mg)$.
\end{say}

Let us summarize what is known about the counter-examples obtained via
these constructions.
\begin{teo}\label{data}
  \begin{enumerate}
  \item [a)] There are 38 families of Galois coverings of the
    projective line satisfying \eqref{star} with $2 \leq g \leq 7$.
    For $g \leq 9$ there are no other counter-examples.  See
    \cite{rohde,moonen-special,moonen-oort,fgp}.
  \item [b)] There are 6 families of Galois coverings of elliptic
    curves satisfying \eqref{star} with $2 \leq g \leq 4$.  For
    $g \leq 9$ there are no other counter-examples.  See \cite{fpp}.
  \item [c)] If a family satisfies \eqref{star} and $g'>0$, then
    necessarily $g'=1$ and the family is one of those in (b). See
    \cite{fgs}.
  \end{enumerate}
\end{teo}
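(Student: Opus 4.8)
The statement collects four classification results from the literature, so the plan is not to reprove them from scratch but to explain the mechanism behind each; the common engine is the reduction of the analytic condition \eqref{star} to a finite, purely group-theoretic computation. First I would observe that a family $\mathscr{C}\to B$ of $G$-covers of fixed topological type is rigid combinatorial data: it is pinned down by the genus $g'$ of the quotient, the branching signature $(m_1,\dots,m_r)$, and a generating vector, and by the Chevalley--Weil formula the class of $H^0(C_b,K_{C_b})$ as a $G$-module is then a fixed function of these data, independent of $b$. Both sides of \eqref{star} are explicit in this $G$-character: a standard pushforward and Riemann--Roch computation on $C_b/G$ expresses $\dim H^0(2K_{C_b})^G$ as an explicit function of $g'$ and $r$ (equal to $3g'-3+r$ in the range of interest), while $\dim(S^2H^0(K_{C_b}))^G$ is read off from the symmetric square of the Chevalley--Weil character. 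Hence, for each datum $(G,g';m_1,\dots,m_r)$ together with a generating vector, \eqref{star} becomes a condition that can be checked mechanically.

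The second step, which gives parts (a) and (b), is to make the search finite. With $g$ bounded (by $7$ in the known range, by $9$ for the non-existence claims) and $g'\in\{0,1\}$, the Riemann--Hurwitz relation
\[
  2g-2=|G|\Bigl(2g'-2+\sum_{i=1}^r\bigl(1-\tfrac{1}{m_i}\bigr)\Bigr)
\]
confines $|G|$, the number $r$ of branch points and every $m_i$ to an effectively computable finite list, and the Riemann existence theorem bounds the number of admissible generating vectors modulo the action of $\Aut G$ and of the mapping class group of the $r$-punctured base curve. Running through these finitely many Hurwitz data and testing \eqref{star} on each produces, for $g'=0$, exactly the $38$ families of (a) (all with $g\le 7$) and, for $g'=1$, exactly the $6$ families of (b) (all with $g\le 4$); the cited works \cite{rohde,moonen-special,moonen-oort,fgp} and \cite{fpp} moreover check that within the stated genus range a family yields a special subvariety via the first construction \emph{only} if \eqref{star} holds, so the enumeration is exhaustive. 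The only real care needed here is organising the enumeration so that it terminates in practice; the bounds themselves are routine.

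Part (c) is genuinely different, because its hypothesis imposes no bound on $g$, so $g'$ is a priori unbounded and no finite computation can settle it. Here I would follow \cite{fgs}: assuming \eqref{star} with $g'\ge 1$, one uses the isogeny decomposition of $JC_b$ induced by the $G$-action (with the generalized Prym variety introduced in \ref{saydue} as one factor) to split the relevant Hodge structures, and one studies the generalized Prym map $\prym\colon B\to\A^\delta_{g-g'}$, whose fibres --- like those of $\phi$ --- are totally geodesic. Comparing the dimension of the Shimura variety $Z$ with the dimensions of the fibres and image of $\prym$, and analysing how the second fundamental form of $j(B')$ interacts with the Prym splitting, one derives a numerical contradiction unless $g'=1$; the surviving families are then identified by (b). I expect this last analysis to be the main obstacle: unlike the mechanical verification of \eqref{star} for a single datum, it requires genuine geometric input on how the Prym construction interacts with the Hodge-theoretic rigidity encoded in \eqref{star}, and it is what turns (c) into a theorem valid for all $g$ rather than a finite check.
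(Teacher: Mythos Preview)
The paper offers no proof of this theorem; it is a survey statement that simply cites the literature, so there is nothing to compare your approach against beyond whether your sketch of what those references do is accurate.

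Your outline of the mechanism behind parts (a) and (b) is essentially right: Chevalley--Weil makes both sides of \eqref{star} computable from the datum, Riemann--Hurwitz bounds the search space, and one enumerates. However, one sentence is wrong and should be removed. You write that the cited works ``moreover check that within the stated genus range a family yields a special subvariety via the first construction \emph{only} if \eqref{star} holds, so the enumeration is exhaustive.'' The paper explicitly states the opposite: ``Condition \eqref{star} is sufficient for a family to yield a Shimura variety. In general it is unknown if it is also necessary.'' (Necessity is known only in the special cases recorded in Theorem~\ref{mona}.) The phrase ``no other counter-examples'' in Theorem~\ref{data} means no other families satisfying \eqref{star}; in this paper ``counter-example'' is used as shorthand for a family satisfying \eqref{star} (see \S\ref{sayuno} and the definition opening \S\ref{sec:algoritmo}). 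The enumeration is exhaustive simply because it runs over all admissible data, not because anyone has proved necessity.

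Your description of (c) is plausible in spirit but speculative; since the paper gives no indication of the argument in \cite{fgs} beyond the Prym setup of \S\ref{saydue}, the honest statement is that (c) is quoted from \cite{fgs} and the present paper does not reproduce its proof.
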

\begin{say}
Note that   we focus on $g\geq 2$, since for $g=1$ there are infinitely many
1-dimensional families satisfying \eqref{star}. 

In fact, for every elliptic curve $C$ the involution $p \mapsto -p$ acts trivially on both $S^2H^0(K_C)$ and $H^0(2K_C)$.
Let $G$ be the group of the biholomorphisms of $C$ generated by it and by a finite group of translations. 
Then $S^2H^0(K_C)^G=S^2H^0(K_C)\cong \C \cong H^0(2K_C)=H^0(2K_C)^G$, so giving examples of \eqref{star} with $G$ of order arbitrarily high.
 Two of these families are listed in Table
2 in \cite{fgp}. 

However all these families are irrelevant for the Coleman-Oort conjecture, since in all cases $B' =\M_1$.
  Note also  that some of the families of Theorem \ref{data} yield the same
  Shimura variety, i.e. have the same image in moduli, see \cite {fgp,fpp}.
\end{say}

\begin{say}\label{solo1}
  It follows from Theorem~\ref{data} (c) that all the cases where \eqref{star} holds and $g'>0$
  are already known and also that no new examples can be found using
  the second construction \ref{saydue}.  Therefore, in order to
  construct new examples using the two methods above (or to exclude
  the existence of such examples) we can restrict to the first
  construction with $g'=0$, i.e. $C_b/G = \PP^1$.
\end{say}

The purpose of this paper is to provide further evidence for the Coleman-Oort conjecture, employing  a computational approach complemented by theoretical arguments. Our result is the following improvement of Theorem
\ref{data}.

\begin{teo} \label{main} The positive-dimensional families of Galois
  covers satisfying \eqref{star} with $2\leq g\leq 100$ are only those
  of Theorem \ref{data}.
\end{teo}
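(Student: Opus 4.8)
The plan is to turn \eqref{star} into a finite search over combinatorial group data, establish a priori bounds that make the search finite, and then combine reduction lemmas with a machine computation.

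\smallskip
\noindent\textbf{Reformulation.} By \ref{solo1} we may restrict to $g'=0$, so a positive-dimensional family of $G$-covers is encoded by a \emph{datum} $\Delta=(G,(\mathscr C_1,\dots,\mathscr C_r))$: a finite group $G$ together with conjugacy classes $\mathscr C_1,\dots,\mathscr C_r$ admitting representatives $\gamma_i\in\mathscr C_i$ with $\gamma_1\cdots\gamma_r=1$ and $\langle\gamma_1,\dots,\gamma_r\rangle=G$, taken up to $\Aut G$ and the braid action on the $\gamma_i$. Positive-dimensionality means $r\ge4$ (the base $B$ has dimension $r-3$), and with $m_i=\ord\mathscr C_i$ Riemann--Hurwitz reads
\[
2g-2=|G|\Bigl(r-2-\sum_{i=1}^r\tfrac1{m_i}\Bigr).
\]
By the Chevalley--Weil formula the $G$-module $H^0(C_b,K_{C_b})$ depends only on $\Delta$; a direct computation gives $\dim H^0(C_b,2K_{C_b})^G=r-3=\dim B$, and by \cite{fgp,fpp} one always has $\dim\bigl(S^2H^0(C_b,K_{C_b})\bigr)^G\ge r-3$. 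Hence \eqref{star} is equivalent to
\[
\dim\bigl(S^2H^0(C_b,K_{C_b})\bigr)^G=r-3 ,
\]
and, decomposing $H^0(K_{C_b})=\bigoplus_\rho\rho^{\oplus n_\rho}$ with the $n_\rho$ supplied by Chevalley--Weil, the left-hand side is the sum of the terms $\binom{n_\rho+1}{2}$ over the real $\rho$, $\binom{n_\rho}{2}$ over the quaternionic $\rho$, and $n_\rho n_{\rho^*}$ over the complex-conjugate pairs $\{\rho,\rho^*\}$ occurring. Thus \eqref{star} becomes an explicit identity in the character table of $G$ and the orders $m_i$, which can be tested mechanically; I would set this up, together with the bookkeeping of data modulo $\Aut G$ and braid equivalence, in \magma.

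\smallskip
\noindent\textbf{A priori bounds.} The identity is extremely restrictive, because $\sum_\rho n_\rho\dim\rho=g$ can be large while the left-hand side equals the small number $r-3$, so few $\rho$ can occur and with small multiplicities. Analysing this, and using Riemann--Hurwitz together with $g\le100$, one confines $\Delta$ to a finite (but huge) set: for $r\ge5$ the factor $r-2-\sum 1/m_i\ge\tfrac12$ bounds $|G|\le4(g-1)\le396$ and hence $r$ as well, while for $r=4$ one only gets $|G|\le12(g-1)\le1188$ with $m_4$ possibly of size $\sim|G|$. One then removes outright the cases already settled by Theorem \ref{data} and by the existing classifications for $G$ cyclic, for $G$ abelian with $r=4$, and for $g\le9$.

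\smallskip
\noindent\textbf{Pruning and computation.} The set from the previous step is far too large for naive enumeration — the groups of highly $2$- and $3$-divisible order, and the case $r=4$, being the bottleneck — so the core of the proof is to cut it down. I would establish structural lemmas that rule out broad classes of data without computing $H^0(K_{C_b})$ at all: cheap lower bounds for the left-hand side of the identity in terms of the number of isotypical components of $H^0(K_{C_b})$, or of the conjugacy classes $\mathscr C_i$ present, that already force it to exceed $r-3$; and reduction arguments by which a datum admitting a suitable normal subgroup $N\trianglelefteq G$ yields a datum for a subgroup or quotient that must itself obey a consequence of \eqref{star}, landing either among the known examples or in a contradiction. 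The finitely many surviving groups are then fed to \magma: enumerate admissible tuples $(\gamma_1,\dots,\gamma_r)$ modulo $\Aut G$ and braid equivalence, compute the character of $H^0(K_{C_b})$ by Chevalley--Weil, and verify the identity. One checks that the only families that arise are the $38$ of Theorem \ref{data}(a) and the $6$ of Theorem \ref{data}(b).

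\smallskip
\noindent\textbf{Main obstacle.} The difficulty is one of scale rather than of principle: inside the crude bounds above the number of groups, and of Hurwitz tuples within each, is astronomically large, so essentially all the effort goes into reduction criteria sharp \emph{and} cheap enough to bring the search within reach of a computation that actually terminates, and into an enumeration of generating vectors modulo $\Aut G$ and the braid group that does not blow up combinatorially; the case $r=4$, where $|G|$ is least constrained, is the most delicate. A lesser technical point is handling the hyperelliptic fibres, where the multiplication map $S^2H^0(K_{C_b})\to H^0(2K_{C_b})$ fails to be surjective, so the inequality used in the reformulation requires the argument of \cite{fgp,fpp}.
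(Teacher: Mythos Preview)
Your plan is essentially the paper's: reduce to $g'=0$, encode by refined passports, bound $|G|$ and $r$ via Riemann--Hurwitz, compute $N(\Delta)$ from Chevalley--Weil, prune, and hand the residue to \magma. Two points where the paper is sharper, and which speak directly to the ``main obstacle'' you identify. First, because $N(\Delta)$ depends only on the unordered tuple of conjugacy classes, there is no need to enumerate generating vectors modulo the braid group at all: for each refined passport (up to $\Aut G$) one tests \eqref{star}, and only if it holds does one check whether \emph{some} spherical system of generators lies in that passport; the paper is explicit that working modulo genuine Hurwitz equivalence is prohibitively expensive and unnecessary here. Second, the decisive pruning criterion is not a generic lower bound on the number of isotypical components but a single concrete fact: if $G$ surjects onto $(\Zeta/2\Zeta)^4$ then \eqref{star} fails. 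This is obtained by combining your quotient-by-normal-subgroup reduction (Lemma~\ref{quoziente}) with a direct Eichler-trace estimate for $G=(\Zeta/2\Zeta)^k$ (Theorem~\ref{Z/2}), and it alone removes over $90\%$ of the candidate data, including the hardest cases, making the computation terminate.
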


\begin{say}
  
  The fact that we found \textbf{no} new families at all is strong
  evidence that there are no more families satisfying (\ref{star}).
  Since all known counter-examples to the Coleman-Oort conjecture can be
  constructed using these families, this also suggests that either further counter-examples do not exist or they are of a completely different nature.
\end{say}

\begin{say} An important point to stress is the following.  Condition
  \eqref{star} is sufficient for a family to yield a Shimura variety.
  In general it is unknown if it is also necessary.
  In this paper we only check whether condition \eqref{star} holds.
  So we cannot exclude that these families give rise to
  counter-examples to Coleman-Oort conjecture.
\end{say}

\begin{say}
  Families of $G$-covers are identified by data of combinatorial and
  group-theoretical nature. We explain this in \S \ref{sec-data}.  So
  the basic strategy is obviously to list all these data and check
  condition \eqref{bona} for each datum in the list.  Since the list
  of these data is extremely long, one needs to avoid unnecessary
  computations.  The first observation is that many data give rise to
  the same family.  More precisely call two data $\Datum$ and
  $\Datum'$ \emph{Hurwitz equivalent} if they have the same group $G$
  and if the families corresponding to them are isomorphic as families
  of algebraic curves with $G$-action.  It turns out that Hurwitz
  equivalence classes can be huge.  To check condition \eqref{star}
  for all the families of some genus, one would start by choosing a
  representative out of any Hurwitz equivalence class, and proceed by
  checking \eqref{star} for all the representatives. However, the
  identification of a single representative inside each class is a
  daunting task, since the classes are huge and Hurwitz equivalence is
  rather complicated.  (An algorithm dealing with Hurwitz equivalence
  appears in \cite{BCGP}. It was used in \cite{fgp} and \cite{fpp}. An
  improvement of this algorithm is given in \cite{BP12}. We hope to
  address the problem of algorithmic computation of Hurwitz
  equivalence in future work.)

  Luckily there is another equivalence relation on data, much coarser
  than the Hurwitz equivalence, which is appropriate to our problem:
  if $\Datum=(G,g_1, \lds, g_r)$, then the number $N=N(\Datum)$ only
  depends on the conjugacy classes $C_1=[g_1], \lds, C_r=[g_r]$.  Also
  the order of these is completely irrelevant. The unordered sequence
  $(C_1, \lds, C_r)$ is called a \emph{refined passport}.  (See
  Definition \ref{defrefined}.)  So our problem depends only on
  refined passports, more precisely on their $\Aut(G)$-orbits, which
  are considerably less in number than Hurwitz equivalence classes,
  leading to much shorter execution times. Notice that in some cases
  refined passports (even if taken up to the action of $\Aut(G)$) are
  still too many to be stored simultaneously into memory, but this is
  not a problem, since we only need to perform an iteration to check
  \eqref{star} on each individually.

  Even after this great simplification the computation remains quite
  formidable, at least for the computers at our disposal. We use a
  number of tricks to reduce the data that must be considered. Several
  exclusions (e.g. cyclic groups) follow from previous results (see
  Theorem \ref{mona}). We complement them with Corollary
  \ref{strongerthanall}, which effectively eliminates more than $90\%$ of
  the data, including some of the hardest cases, thus allowing us to
  complete the computation.
\end{say}

\begin{say}
  For the implementation of the algorithm we used \magma, which is
  quite suited to the task at hand since it allows working with
  groups, group actions and representations, in particular computing
  characters, orbits and stabilizers; furthermore, it contains a
  database of groups of small order.  Our code is available at
  \cite{diecicentomillecolemanoort}.

  The problem lends itself easily to parallelization, since each group
  and signature is treated independently; however, \texttt{MAGMA} does
  not support parallelization natively. The first part of the computation (Algorithm~\ref{algo:signatures}) was parallelized using the standard tool  \cite{Tange2011a}. On the other hand, the rest of the computation can become quite memory-intensive; this leads to technical difficulties,
  mainly concerning situations in which one of the processes is
  terminated for lack of memory, which were addressed by writing the ad
  hoc external program \cite{hlidskjalf} to run the \texttt{MAGMA} script.
  
  Using a computer with 56 Intel Xeon 2.60GHz CPU and 128 GB of RAM we
  were able to finish the computations in less than three days.
\end{say}

\begin{say}
  The plan of the paper is as follows.  In \S \ref{sec-data} we recall
  the description of the families of $G$-curves and some basic facts
  concerning the multiplication map on sections of the canonical
  bundle, which is related with condition \eqref{star}.  At the end we
  prove Lemma \ref{quoziente}, which deals with the behaviour of
  condition \eqref{star} when passing from a given family to a
  quotient by a normal subgroup.  In \S \ref{sec:tricks} we gather
  several facts of quite different nature, some well-known, some new,
  which we have found useful to rule out several cases.  This has been
  essential in order to complete the computation.
  Finally \S \ref{sec:algoritmo} contains a thorough
  explanation of the algorithm.
\end{say}

\medskip

{\bfseries \noindent{Acknowledgements.} } The authors would like to
thank Paola Frediani for help with Lemma \ref{quoziente} and Matteo
Penegini and Fabio Perroni for interesting discussions related to the
subject of this work. The second author would like to thank Matteo
Garofano and Gabriele Merli for technical help with the installation
and the maintenance of the server used for the computations.

\section{Families of $G$-curves}
\label{sec-data}

\begin{say}
  The purpose of this section is to describe some group-theoretic and
  combinatorial data from which one can construct algebraic families
  of curves with prescribed symmetry. We will denote by $\Datum$ the
  datum and by $\mathscr{C}_\Datum \ra B_\Datum$ the corresponding
  family of curves.  The image of $B_\Datum$ in $\mg$ will be denoted
  by $\md$.  We are interested in the closure of $\md$ in $\ag$. As
  explained in \ref{sayuno}, when \eqref{star} holds this closure is a
  Shimura variety generically contained in the Jacobian locus. This is
  explained in more detail at the end of this section, together with
  some related remarks on the multiplication map.
  
  In the following, unless otherwise stated, we assume that the genus
  is at least 2.  For $r \geq 3$, set
  \begin{gather*}
    \Ga_{r}:= \langle \gamma _1, \dots, \gamma _r \ \vert \ \prod
    _{i=1} ^r \gamma _i = 1 \rangle.
  \end{gather*}
\end{say}

\begin{definition}
  \label{def:data}
  If $G$ is a finite group an epimorphism $\theta : \Ga_r \ra G$ is
  called \emph{admissible} if $\theta(\ga_i)\neq 1$ for
  $i=1, \lds, r$.  An $r$-datum is a pair $\Da=(G, \theta)$ where $G$
  is a finite group and $\theta : \Ga_r \ra G$ is an admissible
  epimorphism.  The \emph{signature} of $\Da$ is the vector
  $\mm:=(m_1, \lds, m_r)$ where $m_i:=\ord (\theta (\ga_i))$.  The
  \emph{genus} of $\Da$, denoted by $g(\Delta)$, is defined by the
  Riemann-Hurwitz formula:
  \begin{gather}
    \label{RH}
    2(g(\Delta)-1)=|G|\left(-2+\sum_{i=1}^{r}\left(1-\frac{1}{m_{i}}\right)\right)
  \end{gather}
  We let $\datar$ or simply $\data$ denote the set of all $r$-data.
\end{definition}

\begin{say}
  \label{geometric-basis}
  Orient $S^2$ by the outer normal.  Consider smooth regular arcs
  $\tilde{\alpha}_i$ in $S^2$ joining $p_0$ to $p_1$ such that for
  $i\ne j$ $\tilde{\alfa}_i$ and $\tilde{\alfa}_j$ intersect only at
  $p_0$. Assume also that the tangent vectors at $p_0$ are all
  distinct and follow each other in counterclockwise order.  Next
  consider loops $\alfa_i$ based at $p_0$ constructed as follows:
  $\alfa_i$ starts at $p_0$, travels along $\tilde{\alfa}_i$ until
  near $p_i$, there travels counterclockwise along a small circle
  around $p_i$, finally goes back to $p_i$ again along
  $\tilde{\alfa}_i$. The circles have to be pairwise disjoint. We call
  the resulting set of generators $\{[\alfa_1], \lds, [\alfa_r]\}$ a
  \emph{geometric basis} of $\pi_1(S^2 -P,p_0)$.  Once a geometric
  basis is fixed, there is a well-defined isomorphism
  \begin{gather*}
    \chi : \Ga_r \ra \pi_1(S^2 - P, p_0)
  \end{gather*}
  such that $\chi(\ga_i) = [\alfa_i]$.
\end{say}

\begin{say}
  The following geometric setting gives rise to data (and it is the
  main motivation for them). Let $X$ be a compact (connected) Riemann
  surface. Assume that a finite group $G$ acts effectively and
  holomorphically on $X$ in such a way that $X/G =\PP^1$.  Let
  $P:=\{p_1, \lds, p_r \} $ be the critical values of
  $\pi : X \ra \PP^1\cong S^2$.  Fix $p _0 \in S^2 -P$ and a geometric
  basis $\{[\alf_1], \ldots , [\alf_r]\}$ with corresponding
  isomorphism $\chi: \Ga_r \cong \pi_1 (S^2 -P, p_0)$.  Finally fix a
  point $\tilde{p}_0 \in \pi\meno(p_0)$. As is well-known there is a
  morphism $\tilde{\theta} : \pi_1(S^2-P,p_0) \ra G$ such that for
  $ [\alfa ] \in \pi_1(S^2-P, p_0)$ the lifting of $\alfa$ starting at
  $p_0$ ends at $ g\cd p_0$ where $g = \bar{\theta} ([\alfa])$.  Since
  $X$ is connected $\tilde{\theta}$ is surjective. Therefore
  $\Da:=(G,\theta:=\tilde{\theta}\circ \chi)$ is an $r$-datum,
  $g(\Da) = g(X)$ by the Riemann-Hurwitz formula and $m_i$ is the
  cardinality of the stabilizer of points in $\pi\meno(p_i)$.  We are
  going to show that each datum arises from a covering
  $X \ra \PP^1=X/G$.
\end{say}

\begin{say}
  Assume from now on that $r\geq 3$ and denote by $\T_{0,r}$ the
  Teichm\"uller space in genus $0$ and with $r$ marked points.  The
  definition of $\T_{0,r}$ is as follows.  Fix $r+1$ distinct points
  $p_0, \lds, p_r$ on $S^2$.  For simplicity set
  $P = (p_1, \lds, p_r)$.  Consider triples of the form
  $(\PP^1, x, [f])$ where $x = (x_1, \lds, x_r) $ is an $r$-tuple of
  distinct points in $\PP^1$ and $[f]$ is an isotopy class of
  orientation preserving homeomorphisms
  $f : (\PP^1, x) \ra (S^2 , P)$.  Two such triples $(\PP^1, x , [f])$
  and $(\PP^1, x', [f'])$ are equivalent if there is a biholomorphism
  $\phi: \PP^1 \ra \PP^1$ such that $\phi(x_i) = x'_i$ for any $i$ and
  $ [f] = [f'\circ \phi ]$. The Teichm\"uller space $\T_{0,r}$ is the
  set of all equivalence classes, see e.g. \cite[Chap. 15]{acg2} for
  more details.
\end{say}

\begin{say} \label{saydatum} Fix a geometric basis
  $\mathscr{B}=\{[\alfa_i]\}_{i=}^r$ of $\pi_1(S^2 - P, p_0)$ with
  corresponding isomorphism $\chi: \Ga_r \cong \pi_1(S^2 - P, p_0)$.
  Given an $r$-datum $\Datum=(G, \theta)$, the epimorphism
  $\theta\circ \chi\meno$ gives rise to a topological covering
  $\pi:\Sigma_0 \ra S^2 -P$.  By the topological part of Riemann's
  Existence Theorem this can be completed to a branched cover
  $\pi: \Sigma \ra S^2$.  Given a point
  $t=[\PP^1, x, [f]] \in \T_{0,r}$, the homeomorphism $f$ restricts to
  a homeomorphism of $\PP^1 - x$ onto $S^2 - P$. We get an induced
  isomorphism
  $f_* : \pi_1(\PP^1 - x, f\meno(p_0)) \cong \pi_1(S^2 - P,
  p_0)$. Thus
  $\theta \circ \chi\meno \circ f_* : \pi_1(\PP^1-x,f\meno(p_0)) \ra
  G$ is an epimorphism and this gives rise to a topological covering
  $\pi_t^0: C^0_t \ra \PP^1-x$.  Here $C^0_t$ is an open
  differentiable surface.  Since $\pi_0$ is a local diffeomorphism,
  there is a unique complex structure on $C_t^0$ making $\pi^0_t$
  holomorphic.  By the holomorphic part of Riemann's Existence Theorem
  $C_t^0$ and $\pi_t^0$ may be uniquely completed to a proper
  holomorphic map $\pi_t \colon C_t \rightarrow \PP^1$ and the
  $G$-action extends to $C_t$.  Moreover there is an isotopy class of
  homeomorphisms $\tilde{f}_t : C_t \ra \Sigma$ that cover $f_t$.  As
  $t$ varies in $ \T_{0,r}$ this construction yields a holomorphic map
  to the Teichm\"uller space of $\Sigma$
  \begin{gather*}
    \Phi_\Datum: \T_{0,r} \lra \T_g \cong \T(\Sigma), \quad t \mapsto
    [C_t,[\tilde{f}_t]].
  \end{gather*}
  The group $G$ embeds in the mapping class group of $\Sigma$, which
  we denote by $\Mod_g$.  This embedding depends on $\theta$ and we
  denote by $\gtheta \subset \Mod_g$ its image.  The image of
  $\Phi_\Datum$ coincides with $\T_g^{G_\theta}$, the set of fixed
  points of $\gtheta$ on $\T_g$. As such it is a complex submanifold.
  We denote it by $\T_\Datum$.

  The image of $ \T_\Datum$ in the moduli space $\M_g$ is an
  irreducible algebraic subvariety of dimension $(r-3)$ that we denote
  by $\mathsf{M}_ \Datum$.  (See e.g.
  \cite{gavino,baffo-linceo,BCGP,clp} for more details.)  As explained
  in \cite[p. 79]{gavino} the map $\td \ra \md$ factors through an
  intermediate variety $\tmd$:
  \begin{gather*}
    \T_\Datum \lra \tmd \stackrel{\nu}{\lra} \md.
  \end{gather*}
  The variety $\tmd $ is the normalization of $\md$.
  There is a finite cover $\ttmd \ra \tmd$ and a universal family
  \begin{gather*}
    \pi_\Datum: \cc _\Datum \ra \ttmd.
  \end{gather*}
  We call it the \emph{family of $G$-curves associated to} $\Datum$.
  The proofs of these assertions can be found in \cite{gavino} (where
  $T_g(H_0)$ corresponds in our notation to $\td$,
  $\widetilde{\mathcal{M}}(H_0)$ to $\tmd $, $\mathcal{M}(H_0)$ to
  $\md $ and $\tilde{\mathcal{M}}^{pure}(H_0)$ to $\ttmd$).  Note that
  \begin{gather}\label{r3}
    \dim \md= \dim \ttmd = r-3.
  \end{gather}
\end{say}

\begin{say}
  \label{normale}
  In this construction the choice of the base point $p_0$ is
  irrelevant.  In fact (up to isomorphism) the ramified covering
  $\Sigma \ra S^2$ only depends on
  $N:=\ker \,\theta \circ \chi\meno \normale \pi_1(S^2-P,p_0)$. Two
  isomorphism $\pi(S^2-P,p_0) \ra \pi_1(S^2-P,p_0')$ differ by an
  inner automorphism, so the map from normal subgroups of
  $\pi(S^2-P,p_0)$ to those of $\pi(S^2-P,p_0')$ is well defined. This
  proves that $\td$ and hence also $\md$, $\tmd$ and the family
  $\pi_\Datum: \cc_\Datum \ra \ttmd$ do not depend on the choice of
  the base point $p_0$.
\end{say}

\begin{say} \label{sayeta} On the other hand the construction of
  $\td, \md, \tmd, \pi_\Datum$ does depend on the choice of the
  geometric basis.  Let
  $\overline{\mathscr{B}}=\{[\baralf_i]\}_{i=1}^r$ be another geometric
  basis.  and let $\barc: \Ga_r \ra \pi_1(S^2 -P, p_0)$ be the
  corresponding isomorphism.  Then
  $\mu:=\barc \circ \chi\meno \in \Aut \pi(S^2 -P,p_0)$ has two
  special properties: 1) for every $i = 1, \lds, r$,
  $\mu([\alfa_i])=[\baralf_i]$ is conjugate to $[\alfa_j]$ for some
  $j$; 2) the induced homomorphism on the cohomology group
  $H_2(\pi_1(S^2-P,p_0), \Zeta)$ is the identity.  By a variant of the
  Dehn-Nielsen Theorem (see e.g. \cite[\S 8.2.7 p. 233]{fmarga} or
  \cite[Thm. 5.7.1 p. 197]{zieschang}) there is an
  orientation-preserving diffeomorphism
  $\phi : (S^2 -P , p_0) \ra (S^2 -P , p_0) $ such that
  $\mu = \phi_*$.  Let $\Sigma $ and $\bar{\Sigma}$ be the coverings
  of $S^2$ obtained from $\chi$ and $\barc$.  If
  $N =\ker\, \theta \circ \chi\meno $ and
  $\bar{N} =\ker\, \theta \circ (\barc)\meno $, then
  $\phi_* (N) = \bar{N}$. By the Lifting Theorem there is an
  orientation-preserving diffeomorphism
  $\tilde{\phi} : \Sigma \ra \bar{\Sigma}$ that covers $\phi$. This
  gives rise to a biholomorphism $\T (\Sigma ) \ra \T (\Sigma')$ which
  maps $\td$ constructed using $\chi$ to $\td$ constructed using
  $\barc$.  The identification $\T_g = \T(\Sigma)$ is defined up to
  the action of $\Mod _g$ and the discussion above shows that also
  $T_\Datum$ is well defined up to this action.  In particular
  $\md , \tmd, \ttmd$ and $\pi_\Datum$ are completely independent of
  the choice of the geometric basis.
\end{say}

\begin{say}
  \label{sayrho}
  There is a representation
  \begin{gather}
    \label{defrho}
    \rho: G \lra \Gl H^0(C_t, K_{C_t}), \quad \rho (g) :=(g\meno)^* .
  \end{gather}
  The equivalence class of this representation is independent of
  $t\in \ttmd$.
  
  For later use we recall the following observation, already used in
  the proof of \cite[Thm. 2.3]{fgs}.
  
\end{say}
\begin{prop}\label{suriettaquasisempre}
  Let $G$ be a finite group of automorphisms of a curve $C$, and
  consider the subspace of invariants $H^0(C, 2 K_{C})^G$.  Then the
  multiplication map
  \[m_C^G \colon S^2H^0(C,K_C)^G \rightarrow H^0(C,2K_C)^G
  \]
  is surjective unless $C$ is hyperelliptic (so of genus at least $2$)
  and there is a small deformation $C_t$ of the complex structure of
  $C$ such that all elements of $G$ remain holomorphic and the general
  curve $C_t$ is not hyperelliptic.
  
  In particular, for a fixed  $r$-datum $\Datum=(G,\theta)$, the map $m_C^G$
  is surjective for the general $C \in \ttmd$.
\end{prop}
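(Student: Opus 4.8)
The plan is to treat separately the case where $C$ is not hyperelliptic, where surjectivity always holds, and the hyperelliptic case, where the hyperelliptic involution forces an eigenspace splitting that isolates the only possible failure. First, if $C$ is not hyperelliptic or $g(C)\le 2$, then $m_C\colon S^2H^0(C,K_C)\to H^0(C,2K_C)$ is already surjective: for $g(C)\le 1$ this is trivial, for $g(C)=2$ it is an isomorphism by a dimension count, and for $g(C)\ge 3$ with $C$ not hyperelliptic it is the theorem of Max Noether on canonical curves. Since $m_C$ is $G$-equivariant and $G$ is finite, taking $G$-invariants is an exact functor, so $m_C^G$ is surjective as well. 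The only remaining case is therefore $C$ hyperelliptic of genus $g\ge 3$.

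Assume this from now on, and let $\iota\in\Aut(C)$ be the hyperelliptic involution; it is central in $\Aut(C)$ and hence commutes with $G$. Set $V:=H^0(C,K_C)$ and $W:=H^0(C,2K_C)$. Every holomorphic $1$-form is $\iota$-anti-invariant, so $S^2V$ is entirely $\iota$-invariant, whereas $W$ splits $G$-equivariantly as $W=W^+\oplus W^-$ into $\iota$-eigenspaces, with $\dim W^+=2g-1$ and $\dim W^-=g-2>0$. The classical description of the canonical map of a hyperelliptic curve (equivalently the identity $2K_C=\pi^*\mathcal{O}_{\PP^1}(2g-2)$ for $\pi\colon C\to C/\iota\cong\PP^1$) gives $\operatorname{Im}(m_C)=W^+$. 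Restricting to $G$-invariants and using exactness once more, $\operatorname{Im}(m_C^G)=(W^+)^G$, while $W^G=(W^+)^G\oplus(W^-)^G$. Hence $m_C^G$ is surjective if and only if $(W^-)^G=\bigl(H^0(C,2K_C)^-\bigr)^G=0$. In particular this holds automatically when $\iota\in G$, since then $(W^-)^G\subseteq(W^-)^\iota=0$; so a failure forces $\iota\notin G$, and we set $G':=\langle G,\iota\rangle=G\times\langle\iota\rangle$.

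It remains to identify the condition $(W^-)^G\neq 0$ with the deformation statement. By $G$-equivariant Serre duality $H^1(C,T_C)\cong W^\vee$ as $G$-modules, and since the pairing is $\iota$-invariant one gets $H^1(C,T_C)^-\cong(W^-)^\vee$; thus $(W^-)^G\neq 0$ if and only if $\bigl(H^1(C,T_C)^-\bigr)^G\neq 0$, i.e. if and only if the space $H^1(C,T_C)^G$ of $G$-equivariant first-order deformations of $C$ strictly contains its $\iota$-invariant subspace, which is $H^1(C,T_C)^{G'}$. Now a small $G$-equivariant deformation keeps $C$ hyperelliptic exactly when it keeps $\iota$ holomorphic, i.e. exactly when it is a $G'$-equivariant deformation: one implication holds because the hyperelliptic involution of each nearby curve is canonical, hence $G$-equivariant and, by rigidity of the topological type of the action, still not in $G$; the other because the fixed locus of an involution in a smooth family is smooth and finite over the base, so the number of fixed points of $\iota$, and hence the genus of the quotient, stays constant. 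The $G$-equivariant deformation space of $C$ is smooth with tangent space $H^1(C,T_C)^G$, and its hyperelliptic sublocus is the smooth subgerm with tangent space $H^1(C,T_C)^{G'}$; consequently $H^1(C,T_C)^{G'}\subsetneq H^1(C,T_C)^G$ precisely when there is a small $G$-equivariant deformation whose general member is not hyperelliptic — for the nontrivial implication one picks an analytic arc with tangent direction outside $H^1(C,T_C)^{G'}$, along which a local defining function of the hyperelliptic subgerm has a simple zero, so the general point of the arc is non-hyperelliptic. Combining the last three steps, $m_C^G$ fails to be surjective exactly in the exceptional case of the statement.

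For the last assertion, fix $\Datum=(G,\theta)$. The general $C\in\ttmd$ maps to the general point of the irreducible variety $\md$; either that point is non-hyperelliptic, in which case the first paragraph applies to a dense open subset of members, or all of $\md$ — hence all of $\ttmd$ — consists of hyperelliptic curves. In the latter case every small $G$-equivariant deformation of a member $C$ stays inside $\ttmd$, because $\ttmd$ together with the universal family $\cc_\Datum$ realizes all $G$-curves of the topological type $\Datum$ and this type is unchanged under small deformations; so such a deformation stays hyperelliptic, $C$ never satisfies the deformation condition, and $m_C^G$ is surjective. I expect the main difficulty to lie in the deformation-theoretic translation of the third paragraph — $G$-equivariant Serre duality, the smoothness of the $G$-equivariant deformation space and of its hyperelliptic sublocus, the characterization of hyperellipticity-preserving deformations, and the final transversality step — whereas the reduction to the condition $\bigl(H^0(C,2K_C)^-\bigr)^G=0$ and the non-hyperelliptic case are routine.
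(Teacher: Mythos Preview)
Your proof is correct and follows essentially the same approach as the paper's: Noether's theorem for the non-hyperelliptic case, the eigenspace decomposition under the hyperelliptic involution $\iota$ for the hyperelliptic case, the case split $\iota\in G$ versus $\iota\notin G$, and the Serre-duality translation $H^0(2K_C)^\vee\cong H^1(T_C)$ to identify $(W^-)^G\neq 0$ with the existence of a $G$-equivariant deformation leaving the hyperelliptic locus. Your version is more detailed in the deformation-theoretic step and in spelling out the ``in particular'' assertion for $\ttmd$, but there is no genuine difference in strategy.
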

\begin{proof}
  Let $g$ be the genus of $C$.  The statement is obvious for
  $g \leq 1$ since the $G-$equivariant map
  $S^2(H^0(C,K_C)) \rightarrow H^0(C,2K_C)$ is an isomorphism (among
  spaces of dimension $g$).  If $C$ is not hyperelliptic, then the
  statement follows similarly since the map
  $S^2(H^0(C,K_C)) \rightarrow H^0(C,2K_C)$ is surjective by
  M. Noether's Theorem.
  
  We can then assume that $C$ is hyperelliptic. Let $\sigma$ be the
  hyperelliptic involution. It is well-known that $\sigma$ acts as the
  multiplication by $-1$ on $H^0(C,K_C)$, so trivially on
  $S^2(H^0(C,K_C))$, and that the multiplication map
  $ S^2(H^0(C,K_C)) \rightarrow H^0(C,2K_C)^{\left\langle \sigma
    \right\rangle}$ is surjective.
  
  We distinguish two cases.
   \begin{enumerate}
  \item If $\sigma \in G$ then the surjectivity of $m_C^G$ follows by
    the surjectivity of the map
    $ S^2(H^0(C,K_C)) \rightarrow H^0(C,2K_C)^{\left\langle \sigma
      \right\rangle}$.
  \item If $\sigma \not\in G$ we denote by $\tilde{G}$ the group of
    automorphisms of $G$ generated by $G$ and $\sigma$.  Then
    $m_C^{\tilde{G}}$ is surjective. Moreover
    $ S^2(H^0(C,K_C))^{\tilde{G}} = S^2(H^0(C,K_C))^G$ so we need
    $ H^0(C,2K_C)^G \cong H^0(C,2K_C)^{\tilde{G}}$, that is equivalent
    to
    $ H^0(C,2K_C)^G\subset H^0(C,2K_C)^{\left\langle \sigma
      \right\rangle}$. Dualizing, this is equivalent to
    $ H^1(C,T_C)^G\subset H^1(C,T_C)^{\left\langle \sigma
      \right\rangle}$, which amounts to asking that every small
    deformation of the pair $(C,G)$ remain hyperelliptic.
  \end{enumerate}
\end{proof}
  
  \begin{say}
    We notice that the exceptional case in Proposition
    \ref{suriettaquasisempre} occurs.  Consider for example family
    (27) in \cite[Table 2]{fgp}.  A direct computation shows that this
    $3$-dimensional family of curves of genus $3$ with an action of
    $(\Zeta/2\Zeta)^2$ intersects the hyperelliptic locus in the
    2-dimensional family of curves with an action of
    $(\Zeta/2\Zeta)^3$ considered in \cite[Table 2 - Five critical
    values - (b)]{quasilarge}. If $C$ belongs to this latter family,
    then $3=h^0(C,2K_C)^G \neq h^0(C,2K_C)^{\tilde{G}}=2$ and
    therefore $m_C^G$ has corank $1$.
  \end{say}

\begin{say}
  Consider now a datum $\Datum$ and the family
  $\pi_\Datum: \cc_\Datum \ra \ttmd$. As $t$ varies in $\ttmd$, the
  domain and codomain of $m_{C_t}^G$ do not change in dimension. Set
  \begin{gather}
    \label{defN}
    N (\Datum): = \dim \left ( S^2H^0(C_t,K_{C_t})\right )^G
  \end{gather}
\end{say}
\begin{teo}
  \label{criterio0}
  If $g = g(\Datum) \geq 2$ and
  \begin{gather}
    \label{bona}
    \tag{$\ast$} N (\Datum) = r-3,
  \end{gather}
  then $\overline{j (\M_\Datum)} $ (closure in $\A_g$) is a special
  subvariety of PEL type of $\ag$ that is generically contained in the
  Jacobian locus.
\end{teo}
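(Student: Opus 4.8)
The plan is to identify $\overline{j(\M_\Datum)}$ with the PEL locus of $\ag$ determined by the $G$-action, and to read off both ``PEL type'' and ``special'' from that identification; this is in essence the argument of \cite{fgp}, with \cite{fpp} providing the general set-up. As a preliminary I would record that condition \eqref{bona} amounts to the surjectivity — hence bijectivity — of the multiplication map $m_C^G$ for general $C\in\ttmd$: by $G$-equivariant Serre duality $H^0(C,2K_C)^G\cong\bigl(H^1(C,T_C)^G\bigr)^\vee$, and $H^1(C,T_C)^G\cong H^1(\PP^1,T_{\PP^1}(-B))$ with $B$ the branch divisor of $C\ra\PP^1$, which has dimension $r-3$ by Riemann--Roch; thus $\dim H^0(C,2K_C)^G=r-3$, so by Proposition~\ref{suriettaquasisempre} one always has $N(\Datum)\geq r-3$, with equality precisely when $m_C^G$ is injective. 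This reconciles \eqref{bona} with the hypothesis of \cite{fgp,fpp}; it is not strictly needed for the argument below, which uses \eqref{bona} only through the equality $N(\Datum)=r-3$.

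First I would construct the PEL subvariety. Fix $t_0\in\ttmd$, set $A_0=JC_{t_0}$ with its $G$-action, and write $V=H^1(A_0,\QQ)$ with its symplectic form $\psi$. As $G$ acts holomorphically on $C_{t_0}$ it acts on $V$ by Hodge isometries, so the centralizer $\operatorname{H}_0$ of the semisimple algebra $\QQ[G]\subset\End(V)$ inside $\Sp(V,\psi)$ is a reductive $\QQ$-group, and the homomorphism $h_0$ defining the Hodge structure of $A_0$ factors through $\operatorname{H}:=\operatorname{H}_0\cdot\mathbb{G}_m$ (homotheties included). Then $(\operatorname{H},X)$, with $X$ the $\operatorname{H}(\R)$-conjugacy class of $h_0$, is a Shimura subdatum of PEL type of the Siegel datum defining $\ag$; let $Z\subset\ag$ be the special subvariety that is the image of the connected component through $[A_0]$ of the corresponding Shimura variety. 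Because $G$ acts on the whole family $\pi_\Datum\colon\cc_\Datum\ra\ttmd$ and $\ttmd$ is connected, the Hodge structures of the $JC_t$ form a continuous family in $X$; hence $[JC_t]\in Z$ for every $t$, and as these points fill up $j(\M_\Datum)$, dense in $\overline{j(\M_\Datum)}$ while $Z$ is closed, we get $\overline{j(\M_\Datum)}\subseteq Z$.

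It then remains to compare dimensions. The Torelli map is injective, so $\dim\overline{j(\M_\Datum)}=\dim\M_\Datum=r-3$ by \eqref{r3}. On the other hand $\dim Z$ is the dimension of the uniformizing Hermitian symmetric domain $X$, which by the standard description of the tangent space of a PEL Shimura datum — via the identification $T_{[A_0]}\ag\cong S^2H^1(A_0,\OO_{A_0})$ of the tangent space to the ambient Siegel space — equals $\dim_\C\bigl(S^2H^1(A_0,\OO_{A_0})\bigr)^G$; the $G$-equivariant isomorphism $H^1(A_0,\OO_{A_0})\cong H^0(C_{t_0},K_{C_{t_0}})^\vee$ turns this into $\dim_\C\bigl(S^2H^0(C_{t_0},K_{C_{t_0}})\bigr)^G=N(\Datum)$. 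Hence \eqref{bona} gives $\dim Z=r-3=\dim\overline{j(\M_\Datum)}$, and since $Z$ is irreducible and closed and contains the irreducible $\overline{j(\M_\Datum)}$ of equal dimension, $Z=\overline{j(\M_\Datum)}$. Finally $Z=\overline{j(\M_\Datum)}$ contains $j(\M_\Datum)\neq\vacuo$ and lies in $\overline{j(\mg)}=\tor$, so it is a special subvariety of PEL type generically contained in the Jacobian locus.

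The step carrying the real content is the identity $\dim Z=N(\Datum)$: it rests on the precise Hodge-theoretic computation of the tangent space of a PEL-type Shimura subvariety and its matching with the $G$-fixed part of $T_{[A_0]}\ag$, and it requires taking $\operatorname{H}_0$ to be the \emph{full} centralizer of $\QQ[G]$ (otherwise $Z$ would be a proper special subvariety of strictly smaller dimension), together with correct handling of the homothety torus relating $\Sp$ to $\operatorname{GSp}$. For the complete verification I would refer to \cite{fgp} and \cite{fpp}.
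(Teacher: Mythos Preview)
Your proposal is correct and follows essentially the same approach as the paper: construct the PEL Shimura subvariety $\zd$ attached to the $G$-action, observe that $j(\M_\Datum)\subset\zd$, compute $\dim\zd=N(\Datum)$, and use \eqref{bona} together with $\dim\M_\Datum=r-3$ and the injectivity of $j$ to conclude equality. The paper itself does not give a full proof but only sketches this idea in \ref{uff} and refers to \cite[Thm.~3.9]{fgp} and \cite[Thm.~3.7]{fpp} for the details, which is precisely what you do as well.
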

(See \cite[Thm. 3.9]{fgp} and \cite[Thm. 3.7]{fpp}.)

\begin{say}\label{uff}
  The idea of Theorem \ref{criterio0} is that from $\Datum$ one can
  construct both  $ \md$ and a Shimura subvariety
  $\zd \subset \ag$ with $N(\Datum) = \dim \zd$.  By construction
  $j ( \md) \subset \zd$ and both $\md$ and $\zd$ are irreducible
  algebraic subvarieties. By \eqref{r3} $\dim \md = r-3$.  Since $j$ is
  an injective morphism  of algebraic varieties, when $g\geq 2$ we always
  have $N \geq r- 3$.  If \eqref{bona} holds, then $j(\md) $ is dense
  in $\zd$.
\end{say}

\begin{say}\label{sayinjectiveisenough}
  Note also that (when $g\geq 2$) for any $t\in \ttmd$ we have
  $\dim H^0(2K_{C_t})^G = \dim H^1(T_{C_t})^G = \dim \ttmd = r-3$.
  Hence condition \eqref{bona} in Theorem \ref{criterio0} coincides
  with condition \eqref{star} of the Introduction. It amounts to
  asking that domain and co\-domain of $m_{C_t}^G$ have the same
  dimension.  By Proposition \ref{suriettaquasisempre} this is then
  equivalent to asking that, for general $t$, $m^G_{C_t}$ is
  injective.
\end{say}

      \begin{say}\label{sayquotientcovers}
        We now wish to prove a lemma that is helpful to rule out \emph{a priori} some
        groups. %  cannot give rise to data satisfying \eqref{bona}.

        Let $\Delta=(G,\theta)$ be a datum and let $H$ be a normal
        subgroup of $G$.  Set $K:=G/H$ and let $\pi: G \ra K$ be the
        canonical projection.  The composition
        $\pi\circ \theta:\Ga_r \ra G \ra K$ is an epimorphism, but it
        is not necessarily admissible, since some of the
        $\ga_i \in \Ga_r$ might map to $1$. We can throw them away
        obtaining an admissible epimorphism $\bart: \Ga_s \ra K$ for
        some $s \leq r$. In terms of spherical generators this means
        the following: if $\theta (\ga_i) = g_i$ and $k_i = \pi(g_i)$,
        then $\bart = (k_1, \lds, k_r)$ where we omit all the $k_i$
        that equal 1.  So we get a new datum $\dd = (K, \bart)$. This
        corresponds to the following geometric situation. $\Delta$
        gives rise to the family $\pi_\Datum: \cc_\Datum \ra \ttmd$.
        We can quotient each fibre $C_t$ by $H$ getting a curve
        $F_t:= C_t /H$ on which $K$ acts:
        \begin{equation*}
          \begin{tikzcd}%[column sep=large]
            C_t \arrow {dr} {\pi} \arrow{rr}{p}  &   &  F_t = C_t/H \arrow {dl} \\
            & \PP^1 =C_t/G=F_t/K . &
          \end{tikzcd}
        \end{equation*}
        The curves $F_t$ form a family $\mathscr{F} \ra \ttmd$.  If
        $g(F_t) \geq 2$, out of the datum $\dd$ we can form the family
        $\cc_\dd \ra \ttmdd$ as explained in \ref{saydatum}.  Then
        $\mathscr{F}$ is a pull-back of this family, i.e.
        $f^* \cc_\dd = \mathscr{F}$ for some holomorphic map
        $f: \ttmd \ra \ttmdd$.
      \end{say}

      \begin{lemma} \label{quoziente} In the above situation, assume
        that $g(F) \geq 2$. If \eqref{bona} holds for $\Delta$, then it
        holds also for $\dd$.
      \end{lemma}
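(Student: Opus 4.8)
The plan is to reduce condition \eqref{bona} for $\dd$ to condition \eqref{bona} for $\Datum$ by comparing the relevant invariant subspaces via the quotient map $p\colon C_t\ra F_t$. Recall that by \ref{sayinjectiveisenough} condition \eqref{bona} for a datum is equivalent to the injectivity (for general $t$) of the invariant multiplication map $m^G_{C_t}\colon S^2H^0(C_t,K_{C_t})^G\ra H^0(C_t,2K_{C_t})^G$, and likewise for $\dd$ with $K$ acting on $F_t$. So what I actually want to prove is: if $m^K_{F_t}$ fails to be injective for general $t$, then so does $m^G_{C_t}$.

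First I would set up the comparison of canonical sections. Since $K=G/H$ acts on $F_t=C_t/H$ with $F_t/K=\PP^1=C_t/G$, pullback along $p$ gives an injection $p^*\colon H^0(F_t,K_{F_t})\hookrightarrow H^0(C_t,K_{C_t})$ whose image is exactly the $H$-invariant part $H^0(C_t,K_{C_t})^H$ (the $H$-invariant holomorphic $1$-forms on $C_t$ descend to $F_t$), and this identification is $K$-equivariant, where $K$ acts on the left-hand side naturally and on $H^0(C_t,K_{C_t})^H$ through $G\ra K$. Consequently $p^*$ identifies $H^0(F_t,K_{F_t})^K$ with $(H^0(C_t,K_{C_t})^H)^K=H^0(C_t,K_{C_t})^G$, and similarly it identifies $S^2H^0(F_t,K_{F_t})^K$ with $S^2\bigl(H^0(C_t,K_{C_t})^H\bigr)^K = \bigl(S^2H^0(C_t,K_{C_t})^H\bigr)^K$. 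The key point is that the map $S^2H^0(C_t,K_{C_t})^G\ra S^2H^0(C_t,K_{C_t})$ factors through this subspace, because $G$-invariant symmetric tensors built from $1$-forms are in particular $H$-invariant; so $p^*$ intertwines $m^K_{F_t}$ with the restriction of $m^G_{C_t}$ to the image of $S^2H^0(F_t,K_{F_t})^K$. In fact, since $H^0(C_t,K_{C_t})^G=p^*H^0(F_t,K_{F_t})^K$, the two maps $m^G_{C_t}$ and $m^K_{F_t}$ have literally isomorphic sources, and the source of $m^G_{C_t}$ injects, via $p^*$ on the level of quadratic differentials, compatibly with the target.

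With this in hand, I would argue: take $t$ generic so that by Proposition \ref{suriettaquasisempre} both $m^G_{C_t}$ and $m^K_{F_t}$ are as injective as they can be, i.e.\ \eqref{bona} for $\Datum$ says $m^G_{C_t}$ is injective. Via the identification $S^2H^0(F_t,K_{F_t})^K\cong S^2H^0(C_t,K_{C_t})^G$ (as $p^*$ is injective and $K$-equivariant), and the fact that $p^*$ on quadratic differentials $H^0(F_t,2K_{F_t})\ra H^0(C_t,2K_{C_t})$ is again injective and carries the image of $m^K_{F_t}$ into the image of $m^G_{C_t}$ compatibly with the source identification, injectivity of $m^G_{C_t}$ forces injectivity of $m^K_{F_t}$. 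Hence \eqref{bona} holds for $\dd$. Alternatively, and perhaps more cleanly, one can phrase everything via the dimension count \eqref{defN}: the identification $p^*$ gives $N(\dd)=\dim\bigl(S^2H^0(F_t,K_{F_t})\bigr)^K = \dim\bigl(S^2H^0(C_t,K_{C_t})^G\bigr)\le N(\Datum)$ — wait, one must be careful here, since $S^2$ of the invariants is smaller than the invariants of $S^2$; the correct statement is $N(\dd)=\dim (S^2H^0(F_t,K_{F_t}))^K$ and $H^0(F_t,K_{F_t})\cong H^0(C_t,K_{C_t})^H$ as $K$-modules, so $N(\dd) = \dim\bigl(S^2(H^0(C_t,K_{C_t})^H)\bigr)^K$, which is a summand of $\bigl(S^2 H^0(C_t,K_{C_t})\bigr)^G$, hence $\le N(\Datum)$. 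Combined with the general inequality $N(\dd)\ge s-3$ from \ref{uff} (valid since $g(F)\ge 2$) and the fact that the number $s$ of branch points of $F_t\ra\PP^1$ equals the number $r$ of branch points of $C_t\ra\PP^1$ minus the number of $\ga_i$ with $\pi(\theta(\ga_i))=1$ — which does not immediately give $s-3\ge r-3$ — so this route requires an extra observation.

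The main obstacle, then, is precisely bridging the gap between $s$ and $r$: the pulled-back family $\mathscr F = f^*\cc_\dd$ lives over $\ttmd$ which has dimension $r-3$, but $\ttmdd$ has dimension $s-3\le r-3$, so $f$ need not be dominant. I would handle this by working with the multiplication-map formulation rather than the bare dimension count: the content of \eqref{bona} for $\Datum$ is injectivity of $m^G_{C_t}$ for \emph{general} $t\in\ttmd$, and since $F_t$ varies in (a subfamily of) the family $\cc_\dd$, for such general $t$ the curve $F_t$ is a general member of whatever subvariety of $\ttmdd$ is hit by $f$; one then invokes that $m^K_{F_t}$ is the map induced on $K$-invariants, identified via $p^*$ with the restriction of $m^G_{C_t}$, and concludes injectivity of $m^K_{F_t}$ from that of $m^G_{C_t}$. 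Finally, to promote "injective for $t$ in the image of $f$" to "injective for general $t\in\ttmdd$", one uses Proposition \ref{suriettaquasisempre} again: corank of $m^K_{F}$ is upper semicontinuous on $\ttmdd$, so if it vanishes somewhere it vanishes generically, giving \eqref{bona} for $\dd$. I expect the careful bookkeeping of the $K$-equivariant identification $H^0(F_t,K_{F_t})\cong H^0(C_t,K_{C_t})^H$ and its compatibility with the two multiplication maps to be the technical heart of the argument, while the semicontinuity step is routine.
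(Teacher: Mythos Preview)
Your proposal is correct and follows essentially the same route as the paper: both arguments use the pullback $p^*$ to embed $(S^2 H^0(K_F))^K$ into $(S^2 H^0(K_C))^G$ compatibly with the two multiplication maps (the paper draws this as a commutative square with injective vertical arrows), and deduce that injectivity of $m_C^G$ forces injectivity of $m_F^K$.

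One simplification: your semicontinuity step at the end is valid but unnecessary. The quantities $N(\dd)=\dim\bigl(S^2H^0(K_F)\bigr)^K$ and $\dim H^0(2K_F)^K=s-3$ are constant over $\ttmdd$, so injectivity of $m_F^K$ at a \emph{single} point (namely any $F=F_t$ coming from a general $C_t$) already gives $N(\dd)\le s-3$; combined with $N(\dd)\ge s-3$ from \ref{uff} (using $g(F)\ge 2$), this yields \eqref{bona} for $\dd$ without any appeal to semicontinuity or to genericity of $F$ in $\ttmdd$. This is exactly how the paper closes the argument.
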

      \begin{proof}
        Write for simplicity $C=C_t$ and $F=F_t$.
        We have two pull-back maps:
        \begin{gather*}
          p^* : H^0(K_F) \hookrightarrow H^0(K_C), \quad p^*:
          H^0(2K_F) \hookrightarrow H^0(2K_C).
        \end{gather*}
        From the first one we obtain also an injection
        \begin{gather*}
          f:=S^2p^*: S^2 H^0(K_F) \hookrightarrow S^2 H^0(K_C).
        \end{gather*}
        Since $p^* H^0(K_F)= H^0(K_C)^H$ then
        \begin{gather*}
          f ((S^2 H^0(K_F))^K) \subset (S^2H^0(K_C))^G.
        \end{gather*}
        Thus, we get a commutative diagram
        \begin{equation*}
          \begin{tikzcd}
            \left( S^2 H^0(K_F)\right)^K \arrow[hook]{d}[swap]{f}
            \arrow {rr}{\muu_{F}^K} & & H^0(2K_F)^K
            \arrow[hook]{d}{p^*}
            \\
            \left( S^2 H^0(K_C)\right)^G \arrow {rr}{\muu_{C}^G} & &
            H^0(2K_C)^G.
          \end{tikzcd}
        \end{equation*}
        from which
        \[
          m^G_C \text{ injective} \Rightarrow m^K_F \text{ injective}.
        \]

        As explained in \ref{sayinjectiveisenough}, if \eqref{bona}
        holds for $\Datum$, then $m^G_C$ is injective for general $C$
        and therefore $m^K_F$ is injective for general $F$, so
        $N(\dd) = S^2H^0(K_F)^K \leq H^0(2K_F)^K $.  But since
        $g(F) \geq 2$, the discussion in \ref{uff} shows that
        $ N(\dd \geq s-3=H^0(2K_F)^K $. Thus $N(\dd) = s-3$,
        i.e. $\dd$ satisfies \eqref{bona}.
      \end{proof}

      \section{Avoiding unnecessary computations}
      \label{sec:tricks}

      This section collects several results that allow to rule out
      \emph{a priori} various cases avoiding some parts, sometimes
      really substantial, of the computation. We briefly explain its
      contents.

      Lemmata \ref{kulka} and \ref{bounds} use the same ideas
      underlying the proof of the Hurwitz theorem to ensure that
      signatures exist only in some ranges.  Theorem \ref {mona})
      summarizes results of Moonen and Mohajer-Zuo, saying that no new
      counter-examples exist in certain cases.

      In \ref{ssg} we introduce spherical systems of generators,
      recall the Chevalley-Weil formula, define refined passports and
      show that $N(\Datum)$ only depends on the refined passport of
      the generators.  We then recall Eichler's formula. It is used in
      the proof of Theorem \ref{Z/2}, which says that no
      counter-example exists with $G=(\Zeta/2\Zeta)^k$ for $g\geq 4$. Its
      Corollary \ref{strongerthanall} is the main tool to cut down the
      number of computations to be done.  Other such tools are
      Frobenius' test (Corollary \ref {Frobby}) and an elementary
      observation on the abelianization of a group admitting a spherical system of generators (\S  \ref{abelianizzo}).

      \begin{lemma}
        \label{kulka}
        If $\datum$ is an $r$-datum of genus $g$ and $G$ contains an
        element of order $ > 4(g-1)$, then either $r=3$, i.e. the
        family is $0$-dimensional, or it coincides with family (5) in
        \cite[Table 2]{fgp}.
      \end{lemma}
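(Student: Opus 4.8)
The plan is to lean on the Riemann--Hurwitz formula \eqref{RH}, supplemented at one point by a short group-theoretic computation. Let $C$ be a fibre of the family. Since $C$ carries an automorphism of order $n>4(g-1)$, Lagrange gives $|G|\ge n>4(g-1)$, hence $0<-2+\sum_{i=1}^{r}(1-1/m_i)=2(g-1)/|G|<1/2$; as each summand is $\ge 1/2$ this forces $r\le 4$, and since $r\ge 3$ always, we are done if $r=3$ (the family is $0$-dimensional by \eqref{r3}). So assume $r=4$; the inequality becomes $3/2<\sum_{i=1}^{4}1/m_i<2$, which leaves, up to reordering, only the signatures $(2,2,2,m)$ with $m\ge 3$, $(2,2,3,3)$, $(2,2,3,4)$, $(2,2,3,5)$.

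Next I would use that $n$ divides $|G|$ and exceeds $4(g-1)=2\mu|G|$, where $\mu:=2(g-1)/|G|=-2+\sum(1-1/m_i)$; a direct check gives $\mu=1/6$ for $(2,2,2,3)$ and $\mu\ge 1/4$ for every other signature on the list. As $n/|G|>2\mu$, in every case except $(2,2,2,3)$ the only divisor of $|G|$ in range is $|G|$ itself, so $G=\langle h\rangle$ is cyclic, while for $(2,2,2,3)$ either $G$ is cyclic or $[G:\langle h\rangle]=2$. If $G$ is cyclic then $\Datum$ is a cyclic cover of $\PP^1$, so $|G|=\operatorname{lcm}(m_1,\dots,m_4)$, and since each $\theta(\ga_j)$ equals minus the sum of the other three generators, $\operatorname{lcm}$ of every three of the $m_i$ also equals $|G|$; but for $(2,2,2,m)$ with $m\ge 3$ and for $(2,2,3,4)$, $(2,2,3,5)$ some triple of the $m_i$ has lcm strictly smaller than $\operatorname{lcm}(m_1,\dots,m_4)$ --- a contradiction. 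The only cyclic survivor is therefore $(2,2,3,3)$, forcing $|G|=\operatorname{lcm}(2,2,3,3)=6$ and, by \eqref{RH}, $g=2$ --- this is family $(5)$ of \cite[Table 2]{fgp}.

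It remains to treat $(2,2,2,3)$ with $H:=\langle h\rangle$ of index $2$ in $G$, where \eqref{RH} gives $|G|=12(g-1)$, $|H|=6(g-1)$. Let $a,b,c,e=\theta(\ga_1),\dots,\theta(\ga_4)$: thus $a,b,c$ are involutions, $e$ has order $3$, $abce=1$. Since $H$ is cyclic of even order it has a unique involution, so at most one of $a,b,c$ lies in $H$; and not all three lie outside $H$, for then $abc$ would lie in the non-trivial coset and have even order, contradicting $abc=e^{-1}$. Reordering, $c\in H$ and $a,b\notin H$; as $a^2=1$, $G=H\rtimes\langle a\rangle$ with $a$ acting on $H$ (cyclic of order $6(g-1)$, written additively) by multiplication by a unit $t$. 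Writing $b=h_0a$ one computes $ab=th_0$, and since $c$ is the involution $3(g-1)$ of $H$, $abc=th_0+3(g-1)$. The requirement $\langle a,b,c\rangle=G$ forces $\langle h_0,3(g-1)\rangle=H$, hence $\gcd(h_0,g-1)=1$; the requirement $\ord(abc)=3$ forces $2(g-1)\mid th_0+3(g-1)$, hence $(g-1)\mid th_0$. As $t$ is a unit modulo $6(g-1)$ we have $\gcd(th_0,g-1)=1$, so $g-1=1$: thus $g=2$ and $|G|=12$. Since the abelian group $\Zeta/6\Zeta\times\Zeta/2\Zeta$ has no spherical system of type $(2,2,2,3)$ (a product of three of its involutions lies in its $2$-torsion subgroup, so has order $\le 2$, not $3$), $G$ must be the dihedral group of order $12$; and passing to its index-$2$ cyclic subgroup exhibits each such curve as a $\Zeta/6\Zeta$-cover of $\PP^1$ of signature $(2,2,3,3)$, so this datum cuts out the same locus in $\M_2$ as the cyclic one above --- again family $(5)$ of \cite[Table 2]{fgp}.

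The Riemann--Hurwitz passes and the divisibility bookkeeping are routine; the content --- and the step I expect to be the most delicate --- is the dihedral case, where the bound $n>4(g-1)$ is precisely what forces $g=2$, and one must handle the divisibility conditions with care, notably the interplay between ``$\langle a,b,c\rangle=G$'' (which yields $\gcd(h_0,g-1)=1$) and ``$\ord(abc)=3$'' (which yields $(g-1)\mid th_0$). Checking that the two surviving genus-$2$ data both cut out family $(5)$ is then a matter of consulting \cite{fgp}.
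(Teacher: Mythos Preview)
Your argument is correct, and it takes a genuinely different route from the paper. The paper does not prove the lemma at all: it simply observes that an element of order greater than $4(g-1)$ generates a \emph{large} cyclic automorphism group in the sense of Kulkarni, and invokes \cite[Prop.~4.5]{kulkarni} as a black box. You instead reprove the relevant piece of Kulkarni's classification from scratch, using only Riemann--Hurwitz and elementary group theory.

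What your approach buys is self-containment: the reader never needs to open \cite{kulkarni}, and the two surviving genus-$2$ data ($\Zeta/6\Zeta$ with signature $(2,2,3,3)$ and $D_6$ with signature $(2,2,2,3)$) emerge explicitly, together with the verification that they cut out the same locus in $\M_2$. What the paper's approach buys is brevity --- one sentence instead of a page --- at the cost of relying on a reference whose proof is of comparable length to yours. Two minor points of presentation: your ``reordering'' of $a,b,c$ should be phrased as a Hurwitz move (as in Lemma~\ref{cccp}) rather than a naive permutation, since the product-one relation must be preserved; and the mixed additive/multiplicative notation in the semidirect-product computation (``$b=h_0a$'', ``$ab=th_0$'') would benefit from a sentence fixing conventions. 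Neither affects the correctness.
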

      If $x \in G$ has order $>4(g-1)$, then by definition
      $H:=\sx x \xs $ is a \emph{large automorphism group} of $C$.  So
      the Lemma follows immediately from Proposition 4.5 in
      \cite{kulkarni}.  The idea of using upper bounds for the order
      of single elements of $G$ comes from Corollary 5.10 in
      \cite{BP16}, where the classical bound of Wiman was used.  The
      theorem of Kulkarni that we use here is more precise.

      \begin{lemma}
        \label{bounds}
        Let $\Datum= \datum$ be an $r$-datum with genus $g\geq 2$ and
        $r \geq 4$.  If the datum corresponds to an action of $G$ on a
        smooth curve $X$ with $X/G = \PP^1$, then (a) $r\leq 2g+2$
        with equality only for $X$ hyperelliptic and $G$ generated by the hyperelliptic involution, (b)
        $r \leq 4 + \frac{4(g-1)}{d}$ and (c) $|G|\leq 12 (g-1)$.
      \end{lemma}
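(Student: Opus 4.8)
The plan is to derive all three statements directly from the Riemann--Hurwitz identity \eqref{RH}. Writing $d := |G|$ and
\[
  \beta := -2 + \sum_{i=1}^r\Bigl(1 - \tfrac{1}{m_i}\Bigr) = (r-2) - \sum_{i=1}^r \frac{1}{m_i},
\]
formula \eqref{RH} becomes $2(g-1) = d\,\beta$. Since $\theta$ is admissible we have $m_i \geq 2$ for every $i$, and since $G$ is nontrivial $d \geq 2$; as $g \geq 2$ this gives $0 < \beta = 2(g-1)/d \leq g - 1$. The bound $m_i \geq 2$ also gives $\sum_i 1/m_i \leq r/2$, hence $\beta \geq (r-4)/2$. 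These elementary inequalities are the whole engine of the proof.

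For part (b) I would simply combine $\beta \geq (r-4)/2$ with $\beta = 2(g-1)/d$ to get $r - 4 \leq 4(g-1)/d$. For part (a), combining $\beta \geq (r-4)/2$ with $\beta \leq g-1$ yields $r \leq 2g+2$; and if $r = 2g+2$ then both of these inequalities must be equalities, so $\beta = g-1$ forces $d = 2$ and $\sum_i 1/m_i = r/2$ forces every $m_i = 2$. Thus $|G| = 2$ and $X \to X/G = \PP^1$ is a double cover branched over $2g+2$ points, i.e. $X$ is hyperelliptic of genus $g$ and $G$ is generated by the hyperelliptic involution; conversely that configuration gives $r = 2g+2$, so the characterisation of the equality case is complete.

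For part (c) I need a positive lower bound for $\beta$ that does not involve $r$. If $r \geq 5$ then $\beta \geq (r-4)/2 \geq \tfrac12$. If $r = 4$ then $\beta = 2 - \sum_{i=1}^4 1/m_i > 0$ forces some $m_{i_0} \geq 3$; bounding that term by $\tfrac13$ and the remaining three by $\tfrac12$ gives $\sum_i 1/m_i \leq \tfrac{11}{6}$, hence $\beta \geq \tfrac16$. Either way $\beta \geq \tfrac16$, so $d = 2(g-1)/\beta \leq 12(g-1)$, which is (c). The argument is entirely elementary; the only points needing a little care are the bookkeeping in the equality case of (a) and the small optimisation $\beta \geq 1/6$ for $r = 4$ in (c), so I do not anticipate any serious obstacle.
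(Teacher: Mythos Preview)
Your proof is correct and follows essentially the same route as the paper's: both derive everything from the Riemann--Hurwitz identity via the bound $\beta \geq (r-4)/2$ coming from $m_i \geq 2$, and both handle (c) by showing that the minimum of $\beta$ for $r=4$ is $1/6$ (you argue directly that some $m_i\geq 3$, the paper phrases it as a minimisation over admissible tuples, but the content is identical). Your treatment of the equality case in (a) is in fact slightly more explicit than the paper's.
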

      \begin{proof}
        The arguments are extremely classical, but for the reader's
        convenience we give the proof.  Set $d:=|G|$,
        $\delta: = \sum_{i=1}^r \frac{1}{m_i}$ and
        $\mu: = r -2 - \delta$.  By the Riemann-Hurwitz formula,
        \begin{gather}
          \label{RH2}
          2 (g-1) = d \cd \mu
        \end{gather}
        Assume $2 \leq m_1 \leq m_2 \leq \cds \leq m_r$.  Since
        $g \geq 2$, $ \mu >0$. For $x>0$ set $f(x):= 1 - 1 /x$. Then
        $\mu = \sum_{i-1}^r f(m_i) -2$. Since $f$ is increasing
        $ \mu \geq r \cd f(2) -2 = (r-4)/{2}$.  Using $d\geq 2$ and
        \eqref{RH2}, this gives $ g -1 \geq (r-4)/2$, i.e. the
        inequality in (a).  If equality holds $|G|=2$, so the curves
        are hyperelliptic. By a dimensional count the family coincides
        with that of hyperelliptic curves. This proves (a).

        Set
        $A(r):= \{x=(x_1, \lds, x_r) \in \Zeta^r: x_i \geq 2,
        \sum_{i-1}^r f(x_i) >2\}$ and
        \begin{gather*}
          \bar{\mu}(r):= \min_{x\in A(r)} \biggl\{ \sum_{i=1}^r f(x_i)
          -2 \biggr \}
        \end{gather*}
        Using the fact that $f$ is strictly increasing one verifies
        that for $r=4$ the minimum is achieved at $x=(2,2,2,3) $ and
        $\bar{\mu}(4) = 1/6$, while for $r \geq 5$ the minimum is
        achieved at $x=\underbrace{(2, \dots, 2)}_{r \text{ times}}$
        and $\bar{\mu}(r) = r/2 -2$.  So for any $r\geq 4$ we have
        $\bar{\mu}(r)\geq (r-4)/2$.  Let now $\mm$ be the signature of
        the datum $\datum$. Then $\mm \in A(r)$, so
        $\mu \geq \bar{\mu}(r)$.  Thus \eqref{RH2} gives
        $2 (g-1)/ {d} \geq \bar{\mu}(r) \geq (r-4)/{2}$, which is the
        inequality in (b).

        If $r=4$, \eqref{RH2} gives
        ${2 (g-1)}/ {d} \geq \bar{\mu}(4) = 1/6$, which is equivalent
        to the inequality in (c).  If $r>4$ in the same way we get
        $ d \leq {4(g-1)}/{(r-4)} \leq 4 (g-1)$.
        % This proves (d).
        But $ {4(g-1)}/{(r-4)} \leq 4 (g-1) \leq 12(g-1)$.  Hence the
        inequality in (c) holds for every value of $r$.
      \end{proof}

      \begin{teo}
        \label{mona}
        The data $\Datum=(G,\theta) $ satisfying \eqref{bona} with $G$
        cyclic or with $G$ abelian and $r=4$ are Hurwitz equivalent to
        those mentioned in Theorem \ref{data}.  Moreover for such data
        \eqref{bona} is necessary for $\zd$ to be a Shimura
        subvariety.
      \end{teo}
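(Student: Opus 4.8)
The plan is to deduce both assertions from two published classifications --- Moonen's, for families of cyclic covers of $\PP^1$, and the Mohajer--Zuo classification of abelian covers of $\PP^1$ with four branch points --- together with Theorem~\ref{criterio0}; throughout, $g \geq 2$ is the standing assumption. The observation that makes both cases tractable is that for an abelian group $G$ the braid moves defining Hurwitz equivalence on spherical generating tuples $(g_1,\dots,g_r)$ with $\prod_i g_i = 1$ act simply by transpositions $(\dots,g_i,g_{i+1},\dots) \mapsto (\dots,g_{i+1},g_i,\dots)$, conjugation being trivial; hence, together with the $\Aut(G)$-action, two abelian $r$-data are Hurwitz equivalent precisely when they have the same group and the same refined passport up to $\Aut(G)$. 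Thus for abelian $G$ the otherwise intractable Hurwitz equivalence reduces to a comparison of refined passports --- exactly the form in which the cited authors state their lists --- and, since $r$ and $N(\Datum)$ depend only on the family $\cc_\Datum \to \ttmd$, whether a datum satisfies \eqref{bona} depends only on its Hurwitz equivalence class.

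Now let $G$ be cyclic. If $\Datum$ satisfies \eqref{bona}, then $\omd$ is a special subvariety of $\Ag$ by Theorem~\ref{criterio0}; Moonen's classification of families of cyclic covers of $\PP^1$ \cite{moonen-special} (see also \cite{rohde} and the survey \cite{moonen-oort}) then shows --- using the remark above to pass from refined passports to Hurwitz classes --- that $\Datum$ is Hurwitz equivalent to one of an explicit finite list of data. That list consists of data satisfying \eqref{bona} (a verification carried out in \cite{fgp}; note that Theorem~\ref{criterio0} only gives the implication \eqref{bona} $\Rightarrow$ special, not its converse), and these data appear among the families listed in Theorem~\ref{data}(a); this proves the first assertion for cyclic $G$. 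For the ``moreover'' part one runs the chain in reverse: if $\Datum$ is cyclic and $\zd$ is a Shimura subvariety --- that is, $\omd$ is special --- then by Moonen's theorem $\Datum$ is Hurwitz equivalent to one of the data on the list, which satisfies \eqref{bona}, and hence so does $\Datum$, since \eqref{bona} is a Hurwitz invariant. The case $G$ abelian with $r = 4$ has the same structure, with Moonen's theorem replaced by the Mohajer--Zuo classification of abelian covers of $\PP^1$ branched over four points: the data for which $\omd$ is special form, up to Hurwitz equivalence, a finite explicit list contained in the families of Theorem~\ref{data}, all of which satisfy \eqref{bona}, while Theorem~\ref{criterio0} supplies the converse implication.

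What the argument really demands is a dictionary plus a finite verification rather than any new geometry, and that is where the difficulty lies. One must match the ``types'' and group-theoretic data of \cite{moonen-special} and of Mohajer--Zuo with our refined passports modulo $\Aut(G)$, and check that the criteria for specialness used there --- typically phrased via the eigenspace decomposition of the variation of Hodge structure together with a Higgs-field or Mumford--Tate computation --- agree with specialness of $\omd$ inside $\Ag$ in the present sense, where $C/G = \PP^1$ and there is no positive-genus base or Prym factor to split off. One must also confirm that each of the finitely many data on these lists genuinely satisfies \eqref{bona} and occurs in Theorem~\ref{data}; this has already been done in \cite{fgp} and in the Mohajer--Zuo paper, so I would quote it rather than redo it.
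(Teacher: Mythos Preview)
Your proposal is correct and follows the same route as the paper, which does not give an independent proof but simply attributes the result to Moonen \cite{moonen-special} and Mohajer--Zuo \cite[Thms.~3.1 and 6.2]{mozuo}. You supply more detail than the paper on how the deduction from those references actually works (the reduction of Hurwitz equivalence to refined passports modulo $\Aut(G)$ for abelian $G$, and the matching of criteria), but the underlying approach is identical.
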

      These results are due to Moonen \cite{moonen-special} and
      Mohajer-Zuo \cite[Thms. 3.1 and 6.2]{mozuo}.

      \begin{say}\label{ssg}
        If $G$ is a finite group, giving an $r$-datum
        $\Delta=(G,\theta)$ is equivalent to giving a list of
        generators $g_1, \lds, g_r$ of $G$ such that $g_i\neq 1$ for
        any $i$ and subject to the constraint $g_1\cds g_r = 1$.
        Indeed, this defines an epimorphism
        $\theta\colon \Gamma_r\to G$ by $\theta(\ga_i)=g_i$.  From now
        on we will write $\Datum \in \datar$ as
        $\Datum=(G,g_1, \lds, g_r)$, and we will call
        $(g_1, \lds, g_r)$ a \emph{spherical system of generators} of
        the group $G$.

        Let $\chi_\rho$ denote the character of the representation
        $\rho$ defined in \eqref{defrho}.  As explained in \cite[\S\S
        2.9ff]{fgp} the number $N(\Delta)$ in \eqref {defN} can be
        computed from $\chi_\rho$:
        \begin{gather}
          \label{N}
          N(\Delta)=\frac{1}{2|G|} \sum_{a \in G } \bigl (
          \chi_\rho(a^2) + \chi_\rho(a)^2 \bigr).
        \end{gather}
        So to test \eqref{bona} one needs to compute $\chi_\rho$ for a
        datum $\Delta$.  There are two ways to do that: using
        Eichler's trace formula or the Chevalley-Weil formula. We need
        both and we start from the Chevalley-Weil formula.
      \end{say}
    
      \begin{say} \label{CWsay} Next, fix a datum
        $\Delta= (G, g_1, \lds, g_r)$ and let $m_j:=\ord(g_j)$ as
        usual.  Denote by $\irr G$ the set of irreducible characters
        of $G$. For each $\chi \in \irr G $ fix a representation
        $\sigma_\chi$ with character $\chi$.  For $n\in \mathbb{N}$,
        $n>0$ set $\zeta_n := \exp(2\pi i /n)$. If $\chi \in \irr G$,
        $1 \leq j \leq r$ and $0 \leq \alf < m_j$, denote by
        $N_{j,\alf}$ the multiplicity of $\zeta_{m_j}^\alf $ as an
        eigenvalue of $\sigma _\chi ( g_j)$.
      \end{say}

\begin{teo}[Chevalley--Weil]
  If $\Delta=(G, g_1, \lds, g_r) $ is a datum for the Galois covering
  $ C \rightarrow \PP^1$, then the multiplicity $\mu_{\chi}$ of
  $\chi \in\irr G$ in $\rho$ is
  \begin{equation}\label{eq_ChevWeilFormula}
    \mu_{\chi} = -\deg {\chi} +\sum^r_{j=1} \sum^{m_j-1}_{\alpha=0} N_{j,\alpha}\frac{\alpha}{m_j}
    + \eps,
  \end{equation}
  where $\eps=1$ if $\chi$ is the trivial character and $\eps=0$
  otherwise.
\end{teo}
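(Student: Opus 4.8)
The plan is to compute the multiplicity $\mu_\chi=\langle\chi_\rho,\chi\rangle$ by applying the holomorphic Lefschetz fixed point formula of Atiyah--Bott to each element $g\in G$ acting on $C$. First I would pass to the virtual character $\chi_\rho-1$: by Serre duality $H^1(C,K_C)\cong H^0(C,\OO_C)^\vee$ carries the trivial $G$-action (here $C$ is connected), so for every $g\in G$
\[
\chi_\rho(g)-1=\tr\bigl((g^{-1})^*\mid H^0(C,K_C)\bigr)-\tr\bigl((g^{-1})^*\mid H^1(C,K_C)\bigr).
\]
Consequently $\mu_\chi=\langle\chi_\rho-1,\chi\rangle+\eps$ with $\eps=\langle 1,\chi\rangle$ exactly as in the statement, and it suffices to evaluate $\langle\chi_\rho-1,\chi\rangle=\frac1{|G|}\sum_{g\in G}(\chi_\rho(g)-1)\,\overline{\chi(g)}$, treating $g=1$ (where $\chi_\rho(1)-1=g(C)-1$) separately from $g\ne 1$ (where $g^{-1}$ has only finitely many fixed points on $C$ and Atiyah--Bott applies).

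Next comes the local analysis at the ramification. Every fixed point of $g\ne 1$ lies over one of the branch values $p_j$ of $\pi\colon C\to\PP^1$; the fibre $\pi^{-1}(p_j)$ is a single $G$-orbit whose point stabilizers are the conjugates of $\langle g_j\rangle$, cyclic of order $m_j$. Fix $q_0\in\pi^{-1}(p_j)$ with stabilizer $\langle g_j\rangle$, normalized so that $g_j$ acts on the holomorphic tangent space $T_{q_0}C$ by $\zeta_{m_j}$. Then $hq_0$ is fixed by $g$ exactly when $h^{-1}gh=g_j^{k}$ for some $k$, and in that case $g$ acts on $T_{hq_0}C$ by $\zeta_{m_j}^{k}$; counting the cosets $h\langle g_j\rangle$ meeting this condition shows that the number of such fixed points over $p_j$ with a prescribed $k\in\{1,\dots,m_j-1\}$ is $n_{j,k}=\tfrac1{m_j}\#\{h\in G:h^{-1}gh=g_j^{k}\}$. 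The Atiyah--Bott contribution of $hq_0$ to $\sum_i(-1)^i\tr((g^{-1})^*\mid H^i(C,K_C))$ is $\zeta_{m_j}^{-k}/(1-\zeta_{m_j}^{-k})=1/(\zeta_{m_j}^{k}-1)$, so
\[
\chi_\rho(g)-1=\sum_{j=1}^{r}\sum_{k=1}^{m_j-1}\frac{n_{j,k}}{\zeta_{m_j}^{k}-1}\qquad(g\ne 1).
\]

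Then I would assemble the character inner product. Substituting the last display, exchanging the order of summation, re-summing the inner sum over $h$, and using $\sum_{h\in G}\overline{\chi(h g_j^{k}h^{-1})}=|G|\,\overline{\chi(g_j^{k})}$ (valid because $g_j^{k}\ne 1$ for $1\le k\le m_j-1$), the part of $\langle\chi_\rho-1,\chi\rangle$ coming from $g\ne 1$ collapses to $\sum_{j}\sum_{k=1}^{m_j-1}\frac{\overline{\chi(g_j^{k})}}{m_j(\zeta_{m_j}^{k}-1)}$. Writing $\overline{\chi(g_j^{k})}=\sum_{\alpha=0}^{m_j-1}N_{j,\alpha}\,\zeta_{m_j}^{-\alpha k}$ and invoking the elementary identity $\sum_{k=1}^{m-1}\zeta_m^{-\alpha k}/(\zeta_m^{k}-1)=\alpha-\tfrac{m-1}{2}$ for $0\le\alpha\le m-1$ (obtained by writing $\zeta_m^{-\alpha k}=\zeta_m^{(m-\alpha)k}$, using $(\zeta_m^{\beta k}-1)/(\zeta_m^{k}-1)=\sum_{l=0}^{\beta-1}\zeta_m^{lk}$, and summing the resulting geometric sums together with $\sum_{k=1}^{m-1}1/(\zeta_m^{k}-1)=-\tfrac{m-1}{2}$), this part equals $\sum_{j=1}^r\sum_{\alpha=0}^{m_j-1}N_{j,\alpha}\frac{\alpha}{m_j}-\frac{\deg\chi}{2}\sum_{j=1}^r\bigl(1-\tfrac1{m_j}\bigr)$, where I used $\sum_\alpha N_{j,\alpha}=\deg\chi$. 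Meanwhile the $g=1$ term is $\frac{g(C)-1}{|G|}\deg\chi$, which by the Riemann--Hurwitz formula \eqref{RH} equals $-\deg\chi+\frac{\deg\chi}{2}\sum_{j=1}^r\bigl(1-\tfrac1{m_j}\bigr)$. The two $\sum_j(1-1/m_j)$ terms cancel, leaving $\langle\chi_\rho-1,\chi\rangle=-\deg\chi+\sum_{j=1}^r\sum_{\alpha=0}^{m_j-1}N_{j,\alpha}\frac{\alpha}{m_j}$; adding $\eps$ gives \eqref{eq_ChevWeilFormula}.

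The step I expect to be the genuine obstacle is the normalization in the second paragraph: one must verify that the monodromy generator $g_j$ attached to the datum — built from a geometric basis and the outer-normal orientation of $S^2$ as in \S\ref{geometric-basis} — acts on $T_{q_0}C$ by $\zeta_{m_j}$ and not by its inverse, since the opposite convention would replace $\alpha/m_j$ by $(m_j-\alpha)/m_j$ in \eqref{eq_ChevWeilFormula}; thus one has to track orientations carefully when lifting the small counterclockwise loops around the $p_j$. The remainder is routine bookkeeping with roots of unity. Alternatively, one may simply invoke the classical Chevalley--Weil theorem, e.g. via the treatment recalled in \cite[\S\S 2.9ff]{fgp}, which proceeds along these lines.
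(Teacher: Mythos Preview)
The paper does not actually prove the Chevalley--Weil theorem; it states the formula and refers the reader to \cite[Ch.~1]{gl} for a proof, so there is no ``paper's proof'' to compare against. Your proposal supplies a complete and correct argument, essentially deriving \eqref{eq_ChevWeilFormula} from the holomorphic Lefschetz fixed point formula (equivalently, from the Eichler trace formula that the paper records a few lines later); the bookkeeping with roots of unity and the use of Riemann--Hurwitz for the $g=1$ term are carried out correctly, and your identity $\sum_{k=1}^{m-1}\zeta_m^{-\alpha k}/(\zeta_m^{k}-1)=\alpha-\tfrac{m-1}{2}$ checks out. Your caveat about the normalization of the local monodromy (that $g_j$ acts on $T_{q_0}C$ by $\zeta_{m_j}$ rather than $\zeta_{m_j}^{-1}$) is exactly the point one must pin down, and it is consistent with the counterclockwise convention for the geometric basis set up in \S\ref{geometric-basis}; once that is fixed, the rest is routine.
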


A nice reference for the Chevalley-Weil formula is
\cite[Ch. 1]{gl}. Our implementation uses this formula to compute
$\chi_\rho$ and hence $N(\Delta)$.  In fact we use the same algorithm
as Glei\ss{}ner, which is based in turn on \cite{swinarski}, but with
code optimized for our setting (see \ref{wasgleissners}).

\begin{definition} \label{defrefined} Given a finite group $G$ let
  $\mathcal{C}_G$ or simply $\mathcal{C}$ denote the set of conjugacy
  classes of $G$. The symmetric group $\Sigma_r$ acts on
  $$
  \mathcal{C}_G^r:= \underbrace{\mathcal{C}_G \times \dots \times
    \mathcal{C}_G}_{r \text{ times}} .
  $$ A \emph{refined passport}
  with $r$ branch points for the group $G$ is an element of
  $\mathcal{C}^r_G / \Sigma_r$. Thus a refined passport is an
  undordered sequence of conjugacy classes of $G$.  Given
  $\Datum=(G,g_1, \lds, g_r)$, the \emph{refined passport} of $\Datum$
  is the class of $ ([g_1], \lds, [g_r])$ in
  $\mathcal{C}^r_G/\Sigma_r$.
\end{definition}
Note that this definition is slightly different from those of
\cite{lando-zvonkin} and \cite{paulhus}: we do not assume that a
refined passport comes from a datum.

\begin{say} \label{sayrefined} It is clear that the numbers
  $N_{j,\alf}$ defined in \ref{CWsay} do not change if $g_j$ is
  replaced by another element $g_j'\in G$ which is conjugate to $g_j$.
  Another observation is that obviously the sum in
  \eqref{eq_ChevWeilFormula} is independent of the order.  Thus
  $N(\Delta)$ depends only on the refined passport of $\Datum$.  This
  elementary observation is at the basis of our approach to the
  computation.
\end{say}

\begin{lemma}
  \label{cccp}
  Let $G$ be a finite group and let $C_i \in \mathcal{C}_G$ for
  $i=1, \lds, r$.  Assume that there is a datum
  $\Datum=(G, g_1, \lds, g_r)$ with $g_i \in C_i$ for $i=1, \lds, r$.
  Then for any $\sigma=(\sigma_1, \lds, \sigma_r) \in \Sigma_r$ there
  is a datum $(G, \ga_1, \lds, \ga_r)$ such that
  $\ga_i\in C_{\sigma_i}$ for $i=1, \lds, r$.
\end{lemma}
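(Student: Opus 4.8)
The plan is to reduce to the case where $\sigma$ is a transposition of two adjacent indices and then apply a single Hurwitz (braid) move to the given spherical system of generators.

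First I would use the description of data in \ref{ssg}: a datum $(G,g_1,\dots,g_r)$ is the same as a list $g_1,\dots,g_r$ of nonidentity elements that generate $G$ and satisfy $g_1\cdots g_r=1$. For $1\le k\le r-1$ I would introduce the \emph{braid move}
\[
\beta_k(g_1,\dots,g_r):=\bigl(g_1,\dots,g_{k-1},\ g_k g_{k+1} g_k^{-1},\ g_k,\ g_{k+2},\dots,g_r\bigr),
\]
and check that it sends data to data. The product is unchanged because $(g_k g_{k+1} g_k^{-1})\cdot g_k=g_k g_{k+1}$; every entry is still nonidentity because a conjugate of a nonidentity element is nonidentity; and the new list generates the same subgroup, since $g_{k+1}=g_k^{-1}(g_k g_{k+1} g_k^{-1})g_k$ lies in the group generated by the new entries, and conversely every new entry lies in $\langle g_1,\dots,g_r\rangle=G$. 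Moreover $\beta_k$ changes the associated sequence of conjugacy classes only in positions $k$ and $k+1$, where the new classes are $[g_{k+1}]$ and $[g_k]$ respectively; that is, it swaps the two classes sitting in those positions.

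Now, for $\tau\in\Sigma_r$, let $P(\tau)$ be the assertion that there exists a datum $(G,h_1,\dots,h_r)$ with $h_i\in C_{\tau(i)}$ for all $i$. The hypothesis is exactly $P(\id)$, and by the previous paragraph, applying $\beta_k$ to a datum witnessing $P(\tau)$ produces a datum witnessing $P(\tau\circ s_k)$, where $s_k=(k,k+1)$. Since the adjacent transpositions $s_1,\dots,s_{r-1}$ generate $\Sigma_r$, writing $\sigma=s_{k_1}s_{k_2}\cdots s_{k_m}$ and iterating yields $P(\id)\Rightarrow P(s_{k_1})\Rightarrow P(s_{k_1}s_{k_2})\Rightarrow\cdots\Rightarrow P(\sigma)$, which is the assertion of the lemma.

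I do not expect a genuine obstacle here: the only points requiring care are the three routine verifications in the second paragraph — that $\beta_k$ preserves admissibility and surjectivity of the corresponding epimorphism $\Ga_r\to G$ — and the bookkeeping of how $\beta_k$ permutes the pattern of conjugacy classes, which is the step one must state precisely in order to make the induction on the length of $\sigma$ go through.
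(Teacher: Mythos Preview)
Your proof is correct and is essentially the same as the paper's: both reduce to adjacent transpositions and use the braid (Hurwitz) move $(g_1,\dots,g_r)\mapsto(g_1,\dots,g_{k-1},g_kg_{k+1}g_k^{-1},g_k,g_{k+2},\dots,g_r)$. Your write-up is simply more explicit about the inductive step, spelling out why the move preserves the datum conditions and keeping track of the permutation bookkeeping via the statements $P(\tau)$; the paper leaves these routine verifications to the reader.
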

\begin{proof}
  Since $\Sigma_r$ is generated by simple transpositions, it is enough
  to prove the result for $\sigma = (j, j+1)$, $1\leq j < r$.  Set
  \begin{gather*}
    \ga_i = g_i, \text{ for }i\not \in \{ j, j+1\}, \qquad \ga_j = g_j
    g_{j+1}g_j\meno , \qquad \ga_{j+1} = g_j.
  \end{gather*}
  Then $(G, \ga_1, \lds, \ga_r)$ is still a datum and
  $\ga_i \in C_{\sigma_i}$ for any $i$.
\end{proof}

\begin{say}
  \label{Eichlersay}
  Now we turn to Eichler's formula, which is important to rule out a
  class of groups.  Recall that if $a \in G$, $p\in C$ and
  $a\cd p =p$, then $da(p) $ $ \in$ $ \End T_pC =\C$ is 
  a root of unity, see e.g. \cite[p. 106]{FK}. 
\end{say}
\begin{teo} [Eichler Trace Formula]
  If $a \in G $, $a \neq 1$ then
  \begin{equation}\label{eq_EichlerFormula}
    \chi_\rho(a )=1-\sum_{p\in \fix(a)}\frac{1}{1-da(p)}.
  \end{equation}  
\end{teo}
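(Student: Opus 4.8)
The plan is to derive this from the holomorphic Lefschetz fixed point theorem of Atiyah--Bott, applied to the biholomorphism $a\colon C\to C$. The first thing to check is that the fixed-point set is well behaved: since $a$ has finite order and $a\neq 1$, at each $p\in\fix(a)$ the derivative $da(p)$ is a root of unity, and it must be different from $1$, since otherwise the holomorphic map $a$ would agree with the identity to first order at $p$ and hence, being holomorphic on a connected surface, would be the identity. So $\fix(a)$ is finite, every fixed point is non-degenerate (the endomorphism $1-da(p)$ of the one-dimensional space $T_pC$ is invertible), and the holomorphic Lefschetz formula applies with no correction terms, giving
\begin{gather*}
  \spur(a^*\mid H^0(C,\OO_C))-\spur(a^*\mid H^1(C,\OO_C))=\sum_{p\in\fix(a)}\frac{1}{1-da(p)},
\end{gather*}
where on the right the local contribution $1/\det_\C(1-da(p))$ is simply $1/(1-da(p))$ because $T_pC$ is a line.

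It remains to identify the two traces on the left. The automorphism $a$ acts trivially on $H^0(C,\OO_C)=\C$, so the first term equals $1$. For the second, Serre duality provides a perfect pairing $H^0(C,K_C)\times H^1(C,\OO_C)\to H^1(C,K_C)\cong\C$ compatible with the action of $a$; since $a$ is an orientation-preserving biholomorphism it acts as the identity on $H^1(C,K_C)\cong\C$ (this space is detected by integration of a $(1,1)$-form over $C$), so $\langle a^*\omega,a^*\eta\rangle=\langle\omega,\eta\rangle$ for all $\omega,\eta$. Hence $a^*$ on $H^1(C,\OO_C)$ is conjugate to the inverse transpose of $a^*$ on $H^0(C,K_C)$, whence
\begin{gather*}
  \spur(a^*\mid H^1(C,\OO_C))=\spur((a\meno)^*\mid H^0(C,K_C))=\chi_\rho(a),
\end{gather*}
the last equality being the definition $\rho(g)=(g\meno)^*$ in \eqref{defrho}. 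Substituting $\spur(a^*\mid H^0)=1$ and $\spur(a^*\mid H^1)=\chi_\rho(a)$ into the Lefschetz formula and solving for $\chi_\rho(a)$ yields \eqref{eq_EichlerFormula}.

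The substantive part of the argument is not any computation but keeping the normalizations straight, and this is where I would be most careful. Two conventions must be pinned down consistently: (i) the sign in the local term of the Atiyah--Bott formula, i.e.\ that it is $1/(1-da(p))$ and not $1/(1-da(p)\meno)$ --- one can fix this by testing the formula on $\PP^1$ and on an elliptic curve carrying an automorphism of order $4$; and (ii) the fact that the $a$-action on $H^1(C,\OO_C)$ which is Serre-dual to the action on $H^0(C,K_C)$ is the \emph{inverse} transpose, which is exactly what makes $(a\meno)^*=\rho(a)$ --- and not $\rho(a\meno)$ --- appear on the left-hand side. Once these are reconciled the theorem is immediate. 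As an alternative that avoids the holomorphic Lefschetz theorem entirely, one could, for $a$ of order $n$, write down the decomposition of $H^0(C,K_C)$ as a representation of the cyclic group $\langle a\rangle$ from the Chevalley--Weil formula (reading off the local eigenvalue data from the rotation numbers $da(p)$ at the ramification points of $C\to C/\langle a\rangle$) and then verify directly the resulting identity between a sum of roots of unity and $1-\sum_p 1/(1-da(p))$; this is longer but self-contained given the material above.
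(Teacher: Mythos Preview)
The paper does not give a proof of the Eichler trace formula; it simply records the statement and cites Farkas--Kra \cite[Thm.~V.2.9]{FK}. So there is nothing to compare against on the paper's side.

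Your derivation via the holomorphic Lefschetz fixed point theorem is correct and is one of the standard modern routes to the formula. The identification $\spur(a^*\mid H^1(C,\OO_C))=\spur((a\meno)^*\mid H^0(C,K_C))=\chi_\rho(a)$ through Serre duality is handled carefully, and the check on $\PP^1$ pins down the sign in the local contribution. One small point: your justification that $da(p)\neq 1$ (``agree with the identity to first order \ldots\ hence would be the identity'') is not right as stated---a holomorphic self-map agreeing with the identity to first order at a single point need not be the identity. What makes it work is the finite-order hypothesis you already invoked: in a suitable local coordinate near $p$ the map linearizes (or, equivalently, if $a(z)=z+c_kz^k+\cdots$ with $c_k\neq 0$ then $a^n(z)=z+nc_kz^k+\cdots\neq z$), forcing $a=\id$ locally and hence globally. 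With that clarification the argument is complete. The alternative you sketch via Chevalley--Weil is also viable and is closer in spirit to the classical proofs, but longer.
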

See e.g.  \cite[Thm. V.2.9, p. 281]{FK}.

\begin{teo}\label{Z/2}
  Let $\Delta =(G, g_1, \lds, g_r)$ be a datum corresponding to a
  covering $C \ra \PP^1$ with $G \cong \left( \Zeta/2\Zeta \right)^k$.
  If $g(C) \geq 4$, then \eqref{bona} does not hold for $\Delta$.
\end{teo}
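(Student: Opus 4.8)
The plan is to prove the stronger assertion that $N(\Datum) > r-3$ directly. First I would exploit that $G$ has exponent $2$: every $g_i \neq 1$ then has order $2$, so the signature of $\Datum$ is $(2,\dots,2)$ and the Riemann--Hurwitz formula \eqref{RH} reads $2(g-1) = 2^{k-1}(r-4)$, whence
\begin{gather*}
  r-3 = 1 + 2^{2-k}(g-1).
\end{gather*}
Moreover $a^2 = 1$ for every $a\in G$, so $\chi_\rho(a^2) = \chi_\rho(1) = g$; and since each $a\in G$ is its own inverse, $\chi_\rho$ takes real values, hence $\sum_{a\in G}\chi_\rho(a)^2 = |G|\,\langle \chi_\rho, \chi_\rho\rangle = |G|\sum_{\chi\in\irr G}\mu_\chi^2$, where $\mu_\chi$ denotes the multiplicity of $\chi$ in $\rho$. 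Substituting these two computations into \eqref{N} yields the clean formula
\begin{gather*}
  N(\Datum) = \frac{g + \sum_{\chi\in\irr G}\mu_\chi^2}{2}.
\end{gather*}

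Next I would record the elementary constraints on the non-negative integers $\mu_\chi$: one has $\sum_{\chi}\mu_\chi = \chi_\rho(1) = g$; the trivial character does not occur in $\rho$ (either by \eqref{eq_ChevWeilFormula}, or because $H^0(C,K_C)^G \cong H^0(\PP^1, K_{\PP^1}) = 0$), so the total $g$ is distributed among the $2^k-1$ nontrivial characters; and, since $\mu^2 \geq \mu$ for every non-negative integer $\mu$, one has $\sum_{\chi}\mu_\chi^2 \geq g$, with equality exactly when every $\mu_\chi\in\{0,1\}$. Feeding the formula for $N(\Datum)$ and the expression for $r-3$ into the target inequality $N(\Datum) > r-3$, it becomes
\begin{gather*}
  \sum_{\chi\in\irr G}\mu_\chi^2 \;>\; 2 - g + 2^{3-k}(g-1).
\end{gather*}

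It then remains to verify this inequality, which I would do in three ranges of $k$. If $k\geq 3$ the right-hand side is at most $2-g+(g-1) = 1$, whereas the left-hand side is $\geq g \geq 4$, so the inequality holds. If $k=1$, then $G = \Zeta/2\Zeta$ and the unique nontrivial character carries the whole multiplicity $g$, so $\sum_{\chi}\mu_\chi^2 = g^2$ and the inequality reads $g^2 > 3g-2$, i.e. $(g-1)(g-2) > 0$, which holds for $g\geq 4$. The case $k=2$ is, I expect, the only delicate point: there the right-hand side equals $g$ exactly, so one must exclude the equality $\sum_{\chi}\mu_\chi^2 = g$; but by the last constraint above this would force every $\mu_\chi\in\{0,1\}$, and since only $2^2-1 = 3$ nontrivial characters are available one would get $g = \sum_{\chi}\mu_\chi \leq 3$, contradicting $g\geq 4$. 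This argument also makes transparent why the hypothesis cannot be relaxed: for $k=2$ and $g=3$ one has $\mu_\chi\equiv 1$ and $N(\Datum) = 3 = r-3$, one of the known families of Theorem \ref{data}. Therefore $N(\Datum) > r-3$ for every datum with $G\cong(\Zeta/2\Zeta)^k$ and $g\geq 4$; in particular \eqref{bona} fails.
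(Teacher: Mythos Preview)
Your proof is correct and in fact cleaner than the paper's. Both arguments begin the same way: use that $G$ has exponent $2$ to get the signature $(2,\dots,2)$, apply Riemann--Hurwitz, note that $\chi_\rho$ is real-valued, and plug into \eqref{N}. The divergence is in how the term $\sum_{a\in G}\chi_\rho(a)^2$ is handled. The paper simply drops all summands with $a\neq 1$, keeping only $\chi_\rho(1)^2 = g^2$; this cruder bound yields $N(\Datum) > g\cdot r/8$, which exceeds $r-3$ only once $g\geq 8$, so the paper must first invoke the classification in \cite[Table~2]{fgp} to dispose of $4\leq g\leq 7$. You instead compute the sum exactly via character orthogonality, obtaining $N(\Datum) = \tfrac12(g+\sum_\chi \mu_\chi^2)$, and then the integrality constraint $\mu_\chi^2\geq\mu_\chi$ together with the count of nontrivial characters finishes the job directly for all $g\geq 4$. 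Your route is self-contained (no appeal to prior classification tables), and as a bonus it makes the boundary case $k=2$, $g=3$ visibly sharp.
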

\begin{proof}
  The families fulfilling condition \eqref{bona} with genus up to $7$
  have been classified in \cite[Theorems 5.4 and 5.5]{fgp} and are all
  listed in \cite[Table 2]{fgp}: inspecting the table we see that we
  may assume $g(C) \geq 8$.

  Since all elements $a$ in $G$, $a \neq 1$, have order $2$, by the
  Hurwitz formula
  \begin{equation*}
    \chi_\rho(1)=g(C)=1+\frac{|G|}4 (r-4)=1+2^{k-2}(r-4).
  \end{equation*}
  Moreover for all $p \in \fix (a)$, $da(p)=-1\in \R$ and then, by
  \eqref{eq_EichlerFormula} for all $a \in G$, $ \chi_\rho(a )\in \R$.
  In particular all summands in the expression of $N$ in \eqref{N} are
  real numbers and
  \begin{multline*}
    N(\Delta)= \frac{1}{2|G|} \sum_{a \in G } \bigl ( \chi_\rho(a^2) +
    \chi_\rho(a)^2 \bigr) = \frac{1}{2^{k+1}} \sum_{a \in G } \bigl (
    \chi_\rho(1) + \chi_\rho(a)^2 \bigr)
    \geq\\
    \geq \frac{\bigl (\sum_{a \in G } \chi_\rho(1)\bigr) +
      \chi_\rho(1)^2 }{2^{k+1}} =
    g(C)\left( \frac12 + \frac{g(C)}{2^{k+1}}  \right)=\\
    = g(C)\left( \frac12 + \frac{1}{2^{k+1}} + \frac{r-4}8\right)=
    g(C) \left( \frac1{2^{k+1}} +\frac{r}8 \right) > g(C) \left(
      \frac{r}8 \right)\geq r
  \end{multline*}
  contradicting \eqref{bona}.
\end{proof}
Considering Lemma \ref{quoziente} we deduce the following stronger
result:
\begin{cor}
  \label{strongerthanall}
  Let $\Delta =(G, g_1, \lds, g_r)$ be a datum corresponding to a
  covering $C \ra \PP^1$.  If there is a surjective map
  $G \rightarrow (\Zeta/2\Zeta)^4$, then \eqref{bona} does not hold
  for $\Delta$.
\end{cor}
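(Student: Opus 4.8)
The plan is to descend $\Delta$ along the surjection $G \ra (\Zeta/2\Zeta)^4$ and then invoke Theorem \ref{Z/2}. First I would set $H := \ker\bigl(G \ra (\Zeta/2\Zeta)^4\bigr)$, so that $K := G/H \cong (\Zeta/2\Zeta)^4$, and form the quotient datum $\dd = (K, \bart)$ together with the associated curve $F = C/H$ as in \ref{sayquotientcovers}; thus $K$ acts on $F$ with $F/K = \PP^1$. Granting momentarily that $g(F) \geq 2$, Lemma \ref{quoziente} shows immediately that if \eqref{bona} holds for $\Delta$ then it holds for $\dd$ as well.

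The real content is the lower bound for $g(F)$. Letting $\pi\colon G \ra K$ be the projection, the datum $\dd$ is $(K,\bart)$ with $\bart = (k_1, \lds, k_s)$ the sequence obtained from $(\pi(g_1), \lds, \pi(g_r))$ by deleting the trivial entries; note $k_1 \cds k_s = 1$ because $g_1 \cds g_r = 1$. Every nontrivial element of $K$ has order $2$, so the signature of $\dd$ is $(2, \lds, 2)$ and \eqref{RH} becomes $2(g(F)-1) = 16(-2 + s/2)$, i.e. $g(F) = 4s - 15$. Now $s \geq 4$, since $k_1, \lds, k_s$ span the four-dimensional $\mathbb{F}_2$-vector space $K$; moreover $s \neq 4$, because four spanning vectors would form a basis, hence be linearly independent, contradicting $k_1 + \cds + k_s = 0$. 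Therefore $s \geq 5$ and $g(F) = 4s - 15 \geq 5$.

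In particular $g(F) \geq 2$, so the reduction is legitimate and \eqref{bona} holds for $\dd = ((\Zeta/2\Zeta)^4, \bart)$. On the other hand $g(F) \geq 5 \geq 4$, so Theorem \ref{Z/2}, applied to the covering $F \ra \PP^1$ with Galois group $K \cong (\Zeta/2\Zeta)^4$, says that \eqref{bona} cannot hold for $\dd$. This contradiction proves that \eqref{bona} fails for $\Delta$.

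I expect no serious obstacle here. Every step is either a direct citation of Lemma \ref{quoziente} or Theorem \ref{Z/2}, or the short genus computation above, whose only ingredient beyond Riemann--Hurwitz is the elementary observation that $(\Zeta/2\Zeta)^4$ admits no spherical system of generators of length $4$.
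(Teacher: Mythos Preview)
Your proof is correct and follows the same overall strategy as the paper: pass to the quotient by $H=\ker(G\to(\Zeta/2\Zeta)^4)$, use Lemma~\ref{quoziente} to transfer \eqref{bona} to the quotient datum, and then invoke Theorem~\ref{Z/2} to obtain a contradiction.

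The difference lies in the endgame. The paper, after establishing $s\geq 5$, only records $g(F)\geq 2$ from Riemann--Hurwitz, then bounds $g(F)$ from above via Theorem~\ref{Z/2}, and finally appeals to the classification table in \cite[Table~2]{fgp} to see that $(\Zeta/2\Zeta)^4$ does not appear among the low-genus families. You instead carry out the Riemann--Hurwitz computation explicitly to get $g(F)=4s-15\geq 5$, which falls directly under the hypothesis $g\geq 4$ of Theorem~\ref{Z/2} and yields the contradiction without any further table lookup. Your route is thus slightly more self-contained at the level of the corollary (though Theorem~\ref{Z/2} itself relies on \cite{fgp} for $g\leq 7$); the paper's route, on the other hand, illustrates how the existing classification can be leveraged. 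Either way, the argument is short and the two are minor variants of one another.
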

\begin{proof}
  Assume by contradiction that \eqref{bona} holds.

  Let $H$ be the kernel of the surjection
  $G \rightarrow (\Zeta/2\Zeta)^4$ and consider the family of the
  curves $F_t=C_t/H \rightarrow \PP^1$ as in \ref{sayquotientcovers}.
  They are Galois covers with datum
  $\dd=((\Zeta/2\Zeta)^4, h_1, \cdots, h_s)$.

  Since each set of generators of $(\Zeta/2\Zeta)^4$ has cardinality
  at least $4$, then $s\geq 5$. This implies $g(F) \geq 2$ by the
  Hurwitz formula and $g(F) \leq 4$ by Lemma \ref{quoziente} and
  Theorem \ref{Z/2}.

  The Galois covers of $\PP^1$ with genus among $2$ and $4$ having $4$
  or more branch points are listed in \cite[Table 2]{fgp}: we see that
  the group $(\Zeta/2\Zeta)^4$ does not occur, reaching an absurd.
\end{proof}

The Galois group $G$ of family (34) in \cite[Table 2]{fgp} admits
$(\Zeta/2\Zeta)^3$ as a quotient.  Thus one cannot improve the above
Corollary by substituting $(\Zeta/2\Zeta)^4$ with one of its proper
quotients.  In fact applying Lemma \ref {quoziente} to this case
yields one of the families of elliptic curves mentioned after Theorem
\ref{data}.

There is another useful criterion, already used by Breuer
\cite{breuer} and Paulhus \cite{paulhus}.  Indeed, for some elements
$c$, one can ascertain \emph{a priori} that
$\pi^{-1}(c)=p^{-1}(\tilde c)$ does not contain any system of
generators at all. This is based on a theorem of Frobenius. (See
\cite[p. 406]{lando-zvonkin} for a proof.)
\begin{teo}[Frobenius' formula]
  \label{Frobenius}
  Given a finite group $G$ and conjugacy classes $C_1,\dotsc, C_r$,
  the number of $r$-ples
  $(g_1,\dotsc, g_r)\in C_1\times \dots \times C_r$ such that
  $\prod g_i=1$ is
  \[\frac{|C_1|\dotsm |C_r|}{|G|} \sum_{\chi \in \irr G}
    \frac{\chi(C_1)\dotsm \chi(C_r)}{\chi(1)^{r-2}}.\]
\end{teo}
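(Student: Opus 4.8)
The plan is to run the classical class-algebra argument. For a conjugacy class $C$ of $G$ write $\hat C := \sum_{g\in C} g \in \C[G]$ for the associated class sum; the elements $\hat C$, as $C$ ranges over $\mathcal{C}_G$, form a $\C$-basis of the centre $Z(\C[G])$. The first observation is that the number to be computed is exactly the coefficient of the identity $1\in G$ in the product $\hat C_1\hat C_2\cdots \hat C_r$: expanding the product, every factorization $1=g_1g_2\cdots g_r$ with $g_i\in C_i$ contributes precisely one summand equal to $1$.

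Next I would bring in the other canonical basis of $Z(\C[G])$, the primitive central idempotents
\[
  e_\chi=\frac{\chi(1)}{|G|}\sum_{g\in G}\chi(g^{-1})\,g,\qquad \chi\in\irr G,
\]
which satisfy $e_\chi e_\psi=\delta_{\chi\psi}e_\chi$ and $\sum_\chi e_\chi=1$. By Schur's lemma the central element $\hat C$ acts on an irreducible module with character $\chi$ as a scalar $\omega_\chi(\hat C)$, and comparing traces gives $\omega_\chi(\hat C)=|C|\chi(C)/\chi(1)$, where $\chi(C)$ denotes the common value of $\chi$ on $C$. Hence $\hat C=\sum_\chi \omega_\chi(\hat C)\,e_\chi$, and multiplying and using orthogonality of the $e_\chi$,
\[
  \hat C_1\cdots \hat C_r=\sum_{\chi\in\irr G}\Bigl(\prod_{i=1}^r\frac{|C_i|\,\chi(C_i)}{\chi(1)}\Bigr)e_\chi .
\]

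Finally I would extract the coefficient of $1$. From the displayed formula for $e_\chi$, the coefficient of $1\in G$ in $e_\chi$ is $\chi(1)^2/|G|$ (the $g=1$ term). Substituting this into the previous identity gives
\[
  \frac{1}{|G|}\sum_{\chi\in\irr G}\Bigl(\prod_{i=1}^r\frac{|C_i|\,\chi(C_i)}{\chi(1)}\Bigr)\chi(1)^2
  =\frac{|C_1|\cdots|C_r|}{|G|}\sum_{\chi\in\irr G}\frac{\chi(C_1)\cdots\chi(C_r)}{\chi(1)^{r-2}},
\]
which is the asserted formula. (Alternatively one could argue by induction on $r$ using convolution of class functions, but the class-algebra computation is cleaner.) There is no real obstacle here: the argument is formal once the two bases of $Z(\C[G])$ are available, and the only steps needing a little care are the computation of the scalars $\omega_\chi(\hat C)$ and the bookkeeping that the coefficient is read off at $1\in G$, so that inverse classes never enter the final expression.
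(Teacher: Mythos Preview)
Your argument is correct; this is the standard class-algebra proof of Frobenius' formula. The paper itself does not give a proof of this theorem but only cites \cite[p.~406]{lando-zvonkin}, so your write-up in fact supplies what the paper omits, and the approach you chose (expressing class sums in terms of the primitive central idempotents and reading off the coefficient of $1$) is the classical one found in that reference.
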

Notice that this condition is independent of the order.

\begin{cor}
  \label{Frobby} Let $G$ be a group and $(C_1 \lds, C_r)$ a refined
  passport.  If
  \begin{gather*}
    \sum_{\chi \in \irr G} \frac{\chi(C_1)\dotsm
      \chi(C_r)}{\chi(1)^{r-2}} =0,
  \end{gather*}
  then there is no datum $(G,g_1, \lds, g_r) $ with refined passport
  $(C_1, \lds, C_r)$.
\end{cor}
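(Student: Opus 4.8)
The plan is to obtain this immediately from the Frobenius formula (Theorem~\ref{Frobenius}) by contraposition: I will show that the existence of a datum with the prescribed refined passport forces the character sum to be nonzero. So suppose $\Datum=(G,g_1,\lds,g_r)$ is a datum whose refined passport equals the class of $(C_1,\lds,C_r)$ in $\mathcal{C}^r_G/\Sigma_r$. By Definition~\ref{defrefined} there is a permutation $\sigma\in\Sigma_r$ with $[g_i]=C_{\sigma(i)}$ for every $i$, and since $\Ga_r=\sx\ga_1,\lds,\ga_r\mid \prod_i\ga_i=1\xs$ and $\theta(\ga_i)=g_i$, we have $\prod_{i=1}^r g_i=1$. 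Hence $(g_1,\lds,g_r)$ lies in $C_{\sigma(1)}\times\cds\times C_{\sigma(r)}$ and has trivial product, so the number of such $r$-tuples counted by Theorem~\ref{Frobenius} for the ordered list $(C_{\sigma(1)},\lds,C_{\sigma(r)})$ is at least $1$. (If one prefers to keep the classes in their original order, one may instead first invoke Lemma~\ref{cccp} to replace $\Datum$ by a datum whose $i$-th generator lies in $C_i$; the rest of the argument is unchanged.)

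Next I would substitute into the formula. Theorem~\ref{Frobenius} gives that count as $\tfrac{|C_{\sigma(1)}|\cds|C_{\sigma(r)}|}{|G|}\sum_{\chi\in\irr G}\tfrac{\chi(C_{\sigma(1)})\cds\chi(C_{\sigma(r)})}{\chi(1)^{r-2}}$. Reordering the factors changes neither the product $|C_1|\cds|C_r|$ of the class sizes nor the product $\chi(C_1)\cds\chi(C_r)$, so this quantity equals $\tfrac{|C_1|\cds|C_r|}{|G|}\sum_{\chi\in\irr G}\tfrac{\chi(C_1)\cds\chi(C_r)}{\chi(1)^{r-2}}$. Since the count is a positive integer and $|C_1|\cds|C_r|/|G|>0$, the sum $\sum_{\chi\in\irr G}\chi(C_1)\cds\chi(C_r)/\chi(1)^{r-2}$ is strictly positive, in particular nonzero, contradicting the hypothesis. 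Therefore no datum with refined passport $(C_1,\lds,C_r)$ exists.

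There is essentially no obstacle here: the statement is a one-line consequence of Frobenius' formula. The only point that deserves a word of care is the bookkeeping with the permutation $\sigma$, namely that the Frobenius count — and hence the vanishing or non-vanishing of the character sum — does not depend on the order in which the classes $C_1,\lds,C_r$ are listed, which is exactly the remark recorded immediately after Theorem~\ref{Frobenius}. Everything else is a direct substitution into the formula.
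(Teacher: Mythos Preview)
Your proof is correct and follows exactly the approach the paper intends: the corollary is stated immediately after Frobenius' formula without an explicit proof, the implication being precisely the contrapositive you spell out, together with the observation (recorded right after Theorem~\ref{Frobenius}) that the count is independent of the ordering of the classes. Your careful handling of the permutation $\sigma$ and of the positivity of the prefactor $|C_1|\cds|C_r|/|G|$ fills in exactly the details the paper leaves to the reader.
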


\begin{say}
  \label{abelianizzo}
  We conclude with a useful elementary observation.  Assume that a
  group $G$ admits a system of spherical generators $(g_1, \lds, g_r)$
  with signature $(m_1, \lds, m_r)$.  Decompose its abelianization
  $\Ab G = \Zeta/k_1\Zeta \oplus \cds \oplus \Zeta/{k_p}\Zeta$ with
  $k_1 | \cds | k_p$ (i.e. the $k_i$'s are the invariant
  factors). Since for any $j$, $\Ab G$ is generated by the images of
  $g_1 , \lds, \hat{g_j}, \lds, g_r $, it follows that $p \leq r-1$
  and that
  $k_p$ divides $\operatorname{lcm}(m_1,\dots, \hat{m}_j,\dots, m_r)$ for
  any $j$.
\end{say}

  \section{The algorithm}
    \label{sec:algoritmo}
\begin{say}
  Given a group $G$, let $\mathcal{C}_G$ be the set of its conjugacy
  classes. Recall from Definition \ref{defrefined}
  that a refined passport on $G$ with $r$ branch points is an
  unordered sequence of $r$ conjugacy classes of $G$, i.e. an element
  of $\mathcal{C}_G^r/\Sigma_r$. If a refined passport contains a
  spherical system of generators $\Delta=(g_1,\dotsc, g_r)$,
  $g(\Delta)$ and $N(\Delta)$ only depend on the refined passport of
  $\Delta$. We will say that a refined passport is a
  \emph{counter-example of genus $g$} if it contains a spherical
  system of generators $\Delta$ with $g(\Delta)=g$ such that
  \eqref{bona} holds. Notice that refined passports that satisfy \eqref{bona}
  formally but do not contain a spherical system of generators are
  excluded by this definition.  The group $\Aut G$ acts both on
  $\mathcal{C}_G$ and on the set of refined passports.

\end{say}

\begin{say}

We illustrate an algorithm to attack the following:
\end{say}
\begin{problem}
\label{problem:classification}
For fixed $g\geq 2$, list groups $G$ and counter-examples of genus $g$ on $G$ with $r\geq 4$ branch points, one for each orbit of $\Aut(G)$, leaving aside those with   $G$ cyclic and those with $G$ abelian and $r=4$.
\end{problem}
Our basic strategy is to fix $r$, and then choose one refined passport of genus $g$ with $r$ branch points in each $\Aut(G)$-orbit. If \eqref{bona} holds, it then suffices to determine whether the refined passport contains a system of  spherical systems of generators.
\begin{say}
 
As in \cite{BCGP,fgp}, we use signature as an invariant. Using the notation of Definition \ref{def:data} signature
defines a map
\begin{equation*}
   \datar \ra \N^r, \quad (g_1, \dots, g_r)\mapsto ( \ord(g_1), \lds, \ord(g_r)).
 \end{equation*}
Since the order of an element only depends on its conjugacy class, the signature of a spherical system of generators $(g_1,\dotsc, g_r)$ only depends on the conjugacy classes $([g_1],\dotsc, [g_r])$. Corresponding to the fact that refined passports are taken up to reordering (Lemma \ref{cccp}), signatures can be considered up to permutation, i.e. we can restrict to signatures satisfying $m_1\leq \dots \leq m_r$.

We iterate over the order $d=|G|$. For fixed $d$, let $\mathfrak{S}_{d,g}$ be the set of finite sequences $\mm=(m_1,\dotsc, m_r)$ such that
\begin{enumerate}[label=(S\arabic*)]
\item \label{item:S1}
 $4\leq r\leq \frac{4(g-1)}d+4$ and $d\leq 12(g-1)$;
\item \label{item:S2} each $m_i$ is a divisor of $d$;
\item \label{item:S3} $1< m_i< d$;
\item \label{item:S4} $g$ and $\mm$ satisfy \eqref{RH};
\item \label{item:S5}$m_1\leq \dots \leq m_r$;
\end{enumerate}

By Lemma~\ref{bounds}, the signature of a spherical system of
generators $\Delta$ with $r\geq 4$ and $g(\Delta)=g$ must satisfy
\ref{item:S1}; the restriction $r\geq 4$ ensures that the family is
positive-dimensional, see \eqref{r3}; the restriction $m_i<d$ in
\ref{item:S3} is motivated by the fact that we are only interested in
noncyclic groups $G$.
\end{say}

The set of ``admissible'' signatures $\mathfrak{S}_{d,g}$ is computed by Algorithm~\ref{algo:signatures}.
In the implementation, we found it convenient to compute each $\mathfrak{S}_{d,g}$ for $2\leq g\leq g_{max}$ simultaneously, and then store the result on disk for later retrieval, rather than iterate over $g$; this prevents repeating some computations.

\begin{say}
Elements of $\mathcal{C}_G^r/\Sigma_r$ (i.e. refined passports) can be viewed as \emph{multisets}. Given a set $X$, a multiset of elements of $X$ can be defined as a set $\{(x_1,n_1),\dotsc, (x_k,n_k)\}$ where the $x_i$ are pairwise disjoint elements of $X$ and the $n_i$ are nonnegative integers representing the \emph{multiplicity} of $x_i$. In fact, it is customary to require the $n_i$ to be positive, but it will be convenient for our purposes to allow them to be zero as well. We will write a multiset as $\{x_1^{n_1},\dotsc, x_k^{n_k}\}$. A set $\{x_1,\dotsc, x_k\}$ can be identified with the multiset $\{x_1^1,\dotsc, x_k^1\}$, and the union of two multisets is defined in the obvious way by adding multiplicities.

It will also be convenient to represent elements of $\mathfrak{S}_{d,g}$ as multisets of integers $\{m_1^{n_1},\dotsc, m_k^{n_k}\}$; for instance, the signature $(2,2,3,3,3)$ will be represented by the multiset $\{2^2,3^3\}$.
\end{say}

\begin{say}
  Problem~\ref{problem:classification} can then be addressed by
  iterating through the signatures $\mm\in\mathfrak{S}_{d,g}$ computed in
  Algorithm~\ref{algo:signatures} and groups $G$ of order $d$. A refined passport with signature $\mm$ only exists on a group $G$ if there is at least one element of order $m_j$ for every $m_j\in\mm$; we therefore discard groups and signatures that do not satisfy this condition. More groups and signatures can be eliminated by taking advantage of
  Lemma~\ref{kulka}, 
  Corollary~\ref{strongerthanall} and the observation in
  \ref{abelianizzo}. This
procedure is displayed in Algorithm~\ref{algo:fixed_g}, which reduces
the problem to identifying counter-examples for fixed group and
signature. Notice that on line~\ref{line:multiset} the signature
$\{m_1^{n_1},\dotsc, m_k^{n_k}\}$ is converted into a multiset
\begin{equation}
 \label{eqn:multisetofsets}
 \{A_1^{n_1},\dotsc, A_k^{n_k}\}\subset\mathcal{P}(\mathcal{C}_G),
\end{equation}
where each $A_i$ is the subset of $\mathcal{C}_G$ of conjugacy classes of order $m_i$. This is the basis for the recursion of Algorithm~\ref{alg:fixedGm}.
\end{say}

\begin{say}
  \label{wasgleissners}
At this point we need to determine the counter-examples with a given signature $\mm$ and group $G$. This is achieved by picking one refined passport with signature $\mm$ in each $\Aut(G)$-orbit, then verifying whether \eqref{bona} holds and the refined passport contains a spherical system of generators.

The iteration through one refined passport in each $\Aut(G)$-orbit is performed in Algorithm~\ref{alg:fixedGm}. A refined passport with signature $\{m_1^{n_1},\dotsc, m_k^{n_k}\}$ is obtained by choosing $n_i$ conjugacy classes with order $m_i$ for each $1\leq i\leq k$; in terms of \eqref{eqn:multisetofsets}, for each $i$ we must choose a multiset $S_i$ of $n_i$ elements of $A_i$, counted with multiplicities. We can write $S_i$ in a unique way as a union of sets $\bigcup_j B_{ij}$, where  
$B_{i1}\supset  B_{i2}\supset\dots$ is a definitely empty sequence of subsets of $A_i$; this means that the multiplicity of $C$ in $S_i$ is the number of indices $j$ such that $C$ is in $B_{ij}$. Thus, iterating through the possible multisets $S_i$ is equivalent to iterating through sequences
\[A_i\supset B_{i1}\supset  B_{i2}\supset\dots,\quad  \sum \abs{B_{ij}}=n_i.\]
This must be repeated for each $i=1,\dotsc, k$.

Our goal is to perform a similar iteration by choosing a single element in each $\Aut(G)$-orbit. To begin with, our algorithm picks a subset $B$ of $A_k$ with $1\leq h\leq n_k$ elements, representing $B_{k1}$ in the notation above. For each choice of $B$, the function recursively iterates through refined passports obtained by taking the union of $B$ and a refined passport with $n_i$ elements in each $A_i$, $i<k$ and $n_k-h$ elements in $B$. The recursive call iterates through one refined passport for each $H$-orbit, where $H$ is the stabilizer of $B$ in $\Aut(G)$. Top-level iteration over one subset $B$ for each $\Aut(G)$-orbit completes the algorithm.

This approach requires a much  lower amount of memory than determining all possible refined passports first and then picking one in each $\Aut(G)$-orbit. Notice also that the refined passports produced by the algorithm are elaborated sequentially, and not stored simultaneously into memory. Nevertheless, the algorithm must iterate through one subset of $A_k$ for each $\Aut(G)$-orbit, and we are not aware of any efficient way of doing this without storing all subsets of fixed cardinality in memory. This is the one point in the whole algorithm where memory consumption can be significant.

Algorithm~\ref{algo:counterornothing} determines whether a refined passport is a counter-example; first, the condition of  Theorem~\ref{Frobenius} is verified, i.e. whether $\sum_{\chi} \frac{\chi(C_1)\dotsm \chi(C_r)}{\chi(1)^{r-2}}$ is nonzero; if so,  we will say that $(C_1,\dotsc, C_r)$ passes Frobenius' test. 
Then, condition \eqref{bona} is tested by selecting random elements inside each $C_i$ and computing $N(\Delta)$ by \eqref{eq_ChevWeilFormula}. Notice that  each term $\sum_\alpha N_{j,\alpha}\alpha/m_j$ appearing in \eqref{eq_ChevWeilFormula} only depends on the corresponding $g_j$, and the characters $\chi$ only depend on the group $G$. Thus, it suffices to compute these data at the beginning of the computation, when $G$ is fixed, making the computation of \eqref{eq_ChevWeilFormula} in the iteration quite fast. Only when both Frobenius' test and \eqref{bona} hold does the algorithm
perform the most computationally expensive step, namely checking whether $C_1\times\dots\times C_r$ contain a spherical system of generators, by straightforward iteration.
\end{say}

\begin{say}
For abelian groups $G$, conjugacy classes contain a single element, and the algorithm can be improved.

First, observe that Frobenius' test is useless in this case: the product $C_1\times \dots \times C_r$ contains a single element $(g_1,\dotsc, g_r)$, so the condition $\prod g_i=1$ is best verified directly.

Second, since refined passports contain a single element of $G^r$, we effectively iterate through elements of $G^r$. However, in a spherical system of generators $(g_1,\dotsc, g_r)$ any element is determined by the others, so we can iterate through ``short sequences'' $(g_1,\dotsc,\hat{g_j},\dotsc,  g_{r})$. Thus, we proceed as follows.

We fix $m_j$ in $\mm=(m_1,\dotsc, m_r)$  such that the number of elements of $G$ with order $m_j$ is largest; then, we use a scheme analogous to Algorithm~\ref{alg:fixedGm} to iterate through $(r-1)$-ples $(g_1,\dotsc,\hat{g_j},\dotsc,  g_{r})\in G^{r-1}$ with signature $(m_1,\dotsc, \hat{m_j},\dotsc, m_r)$, one for each $\Aut(G)$-orbit. We then define $g_j$ as the inverse of $g_1\dotsm \hat{g_j}\dotsm g_r$; if $g_j$ has order $m_j$, the $(g_1,\dotsc, g_r)$ is a candidate for a spherical system of generators with signature $\mm$. At this point, we test condition \eqref{bona} and, if it holds, whether the elements $g_1,\dotsc, g_r$ generate the group $G$.

\begin{algorithm}[tbh]  % 1
\SetKwInOut{Input}{input}
\SetKwInOut{Output}{output}
\SetKwProg{Fn}{Function}{}{}
\Input{integers $g\geq 2,d\geq 2$}
\Output{the set of signatures $\mathfrak{S}_{d,g}$}
\Fn{$\mathfrak{S}_{d,g}(d,g)$}{
\If{$d$ prime}{\Return $\emptyset$ \tcp{\ref{item:S3} cannot be satisfied}}
$\mathfrak{S}_{d,g}\leftarrow\emptyset$\\
  \For{$r$ satisfying \ref{item:S1}}{
  $D\leftarrow\{n\in\N\mid 2\leq n<  d,\; n \text{ divides } d\}$\;
  \For{$m_1,\dotsc, m_r\in D$, $m_1\leq \dots \leq m_r$}{
  \If{$(m_1, \dotsc, m_r)$ satisfies \ref{item:S4} and \ref{item:S5}}
  {insert $(m_1,\dotsc, m_r)$ in $\mathfrak{S}_{d,g}$}
  }}
  \Return $\mathfrak{S}_{d,g}$
}
\caption{\label{algo:signatures}Computing the signatures}
\end{algorithm}

\begin{algorithm}[tbh]
\SetKwInOut{Input}{input}
\SetKwInOut{Output}{output}
\SetKwProg{Fn}{Function}{}{}
\SetKw{Return}{return}
\Input{an integer $g\geq 2$}
\Output{counter-examples of genus $g$ with $r\geq 4$ branch points, one for each $\Aut(G)$-orbit}
\Fn{admissible($G$,$\mm$)}{
$\mathcal{O}\leftarrow\{\ord(g)\mid g\in G\}$\;
\uIf{$r=4$ and $G$ abelian}{\Return false;}
\uElseIf{$r >4$ and $G$ cyclic}{\Return false;}
\uElseIf{some $m_i$ is not in $\mathcal{O}$}{\Return false}
\uElseIf{$g>2$ and some $o\in\mathcal{O}$ is greater than $4(g-1)$}{\Return false \tcp{Lemma~\ref{kulka}}}
\Else{
         decompose the abelianization of $G$ as $\Zeta/{k_1}\Zeta\oplus \dots \oplus \Zeta/{k_p}\Zeta$, with each $k_i$ dividing $k_{i+1}$\;
         \uIf{at least $4$ elements in $(k_1,\dots, k_p)$ are even}{\Return false \tcp{$G$ surjects over $(\Zeta/2\Zeta)^4$ (Corollary~\ref{strongerthanall})}}
         \uIf{$p\geq r$}{\Return false \tcp{$r-1$ elements cannot generate $G$}}        
         \uElseIf{exists $j$ such that $k_p\nmid\operatorname{lcm}(m_1,\dots, \hat{m}_j,\dots, m_r)$}{\Return false \tcp{\S \ref{abelianizzo}}} 
         \Return true \tcp{passed all tests}
}}

\For{$2\leq d\leq
12(g-1)$} {
    determine $\mathfrak{S}_{d,g}$ by Algorithm~\ref{algo:signatures}\;
        \For{$\mathbf{m}=\{m_1^{k_1},\dotsc, m_k^{r_k}\}$ in $\mathfrak{S}_{d,g}$} {    \For{$G$ group of order $d$} {
\If{admissible($G$,$\mm$)}{
    \For{$1\leq i\leq k$}{\label{line:multiset}
    $A_i\leftarrow \{C\in \mathcal{C}_G\mid \ord(C)=m_i\}$}
    CounterExamplesIn $(G, \{ A_1^{r_1},\dotsc, A_k^{r_k}\})$
    \tcp{Find counter-examples for group $G$ and signature $\mm$ using Algorithm~\ref{alg:fixedGm}}
}}
}
}
 \caption{\label{algo:fixed_g} Find counter-examples of genus $g$ with $r\geq 4$ branch points}
\end{algorithm}

\begin{algorithm}[H]    % 2
\SetKwInOut{Input}{input}
\SetKwInOut{Output}{output}
\SetKw{Return}{return}
\SetKwComment{tcp}{//}{}
\SetKwProg{Fn}{Function}{}{}
\Input{a group $G$, a refined passport $(C_1,\dotsc, C_r)$}
\Output{true if the refined passport is a counter-example, false otherwise
}
\caption{Determine whether a refined passport is a counter-example \label{algo:counterornothing}}
\Fn{IsCounterExample($G$, $C_1,\dotsc, C_r$)}{
 \If{$(C_1,\dotsc, C_r)$ passes Frobenius' test and $N= r-3$}{
\For{$(g_1,\dotsc, g_{r-1})$ in $C_1\times\dotsm \times  C_{r-1}$}{
    $g_r\leftarrow (g_1\dotsm g_{r-1})^{-1}$\;
    \If{$g_r\in C_r$ and $\langle g_1,\dotsc, g_{r-1}\rangle =G$}{
        \Return true
}}
 \Return false
} 
}
\end{algorithm}

\begin{algorithm}[H]
\caption{Find counter-examples for fixed group and signature \label{alg:fixedGm}}
\SetKwInOut{Input}{input}
\SetKwInOut{Output}{output}
\SetKwProg{Fn}{Function}{}{}
\SetKw{Return}{return}
\Input{A group $G$ and a nonempty multiset $\{A_1^{n_1},\dots, A_k^{n_k}\}$ where each $A_i$ is a nonempty set of conjugacy classes of $G$ and each $n_i$ is a nonnegative integer}
\Output{Counter-examples obtained by choosing $n_i$ elements in each $A_i$, one for each $\Aut(G)$-orbit}
\Fn{CounterExamplesIn($G$, $\{A_1^{n_1},\dots, A_k^{n_k}\}$, $S=\emptyset$, $H=\Aut(G)$)}{
\tcp{This is a recursive function using two arguments with default values: $S$ is a multiset of conjugacy classes of $G$; $H$ is a subgroup of $\Aut(G)$ acting on each $A_i$}
\uIf{$k=0$}{
    \If{IsCounterExample($G$,$S$)}{print $G,S$}        
}
\uElseIf{$n_k=0$}{CounterExamplesIn($G$,$\{A_1^{n_1},\dots, A_{k-1}^{n_{k-1}}\}$,$S$,$H$)}
    \uElseIf{$A_k$ contains a single element $a$}{
        CounterExamplesIn($G$, $\{A_1^{n_1},\dots, A_{k-1}^{n_{k-1}}\}$, $S\cup \{a^{n_k}\}$, $H$)
        }    
\Else{CounterExamplesWith($G$,$\{A_1^{n_1},\dots,A_{k-1}^{n_{k-1}}\}$,$S$,$H$,$A_k$, $n_k$)}
}
\Fn{CounterExamplesWith($G$,$\{A_1^{n_1},\dots, A_k^{n_k}\}$,$S$,$H$,$A$,$n$)}{
\tcp
{Helper function that iterates through subsets of $A$}
    \For{$1\leq h\leq n$}{
        $X\leftarrow \{B\subset A\mid \abs{B}=h\}$\\
        \For{one $B$ in each $H$-orbit of $X$ }{
            $K\leftarrow$ stabilizer of $B$ for action of $H$ on $X$\\
            CounterExamplesIn($G$, $\{A_1^{n_1},\dots, A_{k}^{n_{k}},B^{n-h}\}$,$S\cup B$,$K$)}
}
}

\end{algorithm}

This is clearly faster than a plain application of Algorithm~\ref{alg:fixedGm}, because an $r$-fold iteration is replaced by an $(r-1)$-fold iteration. Notice, however, that the same counter-example can appear more than once in the output, if this method is applied to cases where $m_j$ has multiplicity greater than one, say $m_j=m_{j+1}$. Indeed, a counter-example $(g_1,\dotsc, g_r)$ can be obtained by completing two different short sequences, namely
$(g_1,\dotsc, \widehat{g_j}, \dotsc, g_r)$ and
$(g_1,\dotsc, \widehat{g_{j+1}}, \dotsc, g_r)$. If the two short sequences
lie in different $\Aut(G)$-orbits, the output will contain two counter-examples in the $\Aut(G)$-orbit of $(g_1,\dotsc, g_r)$.
\end{say}


\begin{thebibliography} {99}

 \bibitem{acg2} E.~Arbarello, M.~Cornalba, and P.~A. Griffiths.
   \newblock {\em Geometry of algebraic curves. {V}ol. {II}}.
  \newblock Springer-Verlag, New York, 2011.

\bibitem{BCGP} I.~Bauer, F.~Catanese, F.~Grunewald, R.~Pignatelli.
  \newblock Quotients of products of curves, new surfaces with $p_g=0$
    and their fundamental groups, {\em American J. of Math.}, {134},
  (2012), 993--1049.

\bibitem{BP12} I.~Bauer, R.~Pignatelli. \newblock The classification
  of minimal product-quotient surfaces with $p_g=0$.  \newblock {\em
    Math. Comp.} 81 (2012), no. 280, 2389–2418.

\bibitem{BP16} I.~Bauer, R.~Pignatelli. \newblock Product-Quotient
  Surfaces: new invariants and algorithms.  \newblock {\em Groups,
    Geometry and Dynamics}, 10 (2016), 1, 319-363

\bibitem{MA}
      W.~Bosma, J.~Cannon, and C.~Playoust, The Magma algebra system. I. The user language, {\em J. Symbolic Comput.}, 24 (1997), 235–265. 

  
\bibitem{breuer} T.~Breuer.  \newblock {\em Characters and
    automorphism groups of compact {R}iemann surfaces}.
  \newblock   Cambridge University Press, Cambridge, 2000.

\bibitem{baffo-linceo} F.~Catanese, M.~L{\"o}nne, and F.~Perroni.
  \newblock Irreducibility of the space of dihedral covers of the
  projective line of a given numerical type.  \newblock {\em Atti
    Accad. Naz. Lincei Cl. Sci. Fis. Mat. Natur. Rend. Lincei (9)
    Mat. Appl.}, 22(3):291--309, 2011.

\bibitem{clp}
F.~Catanese, M.~L\"{o}nne, and F.~Perroni.
\newblock The irreducible components of the moduli space of dihedral covers of
  algebraic curves.
\newblock {\em Groups Geom. Dyn.}, 9(4):1185--1229, 2015.

\bibitem{cfg} E.~Colombo, P.~Frediani, and A.~Ghigi, \newblock {\em On
    totally geodesic submanifolds in the {J}acobian locus}, \newblock
  {Internat. J. Math.}, 26 (2015), no. 1, 1550005.

  \bibitem{hlidskjalf} D.~Conti. \texttt{https://github.com/diego-conti/hlidskjalf}
  
  \bibitem{diecicentomillecolemanoort} D. Conti, A. Ghigi, R. Pignatelli. \texttt{https://github.com/diego-conti/centone}

\bibitem{fmarga} B.~Farb and D.~Margalit. \newblock{\em A Primer on
    Mapping Class Groups}, Princeton University Press, Princeton and
  Oxford, 2012.
  
 \bibitem{FK} H.~M. Farkas and I.~Kra.  \newblock {\em Riemann
     surfaces}, volume~71 of {\em Graduate Texts in Mathematics}.
   \newblock Springer-Verlag, New York, second edition, 1992.


\bibitem{fgp} P.~Frediani, A.~Ghigi and M.~Penegini. \newblock Shimura
  varieties in the Torelli locus via Galois coverings. {\em
    Int. Math. Res. Not.} 2015, no. 20, 10595-10623.

\bibitem {fgs} P.~Frediani, A.~Ghigi, and I.~Spelta Infinitely many
  Shimura varieties in the Jacobian locus for $g \leq 4$.
  Preprint.{\em ArXiv:1910.13245.} \newblock 2019. To appear on {\em
    Ann. Sc. Norm. Super. Pisa}.

  
\bibitem{fpp} P.~Frediani, M.~Penegini and P.~Porru.  Shimura
  varieties in the Torelli locus via Galois coverings of elliptic
  curves.  {\em Geometriae Dedicata} 181 (2016) 177-192.

\bibitem{fpi} P.~Frediani, G.P.~Pirola.  \newblock On the geometry of
  the second fundamental form of the Torelli map.  \newblock
 {\em Proc. Amer. Math. Soc.} 149 (2021), no. 3, 1011–1024.
  
  
 \bibitem{deba} A.~Ghigi.  \newblock On some differential-geometric
   aspects of the Torelli map.  \newblock {\em Boll. Unione Mat. Ital.}
   12 (2019), no. 1-2, 133-144.

 
\bibitem{gpt} A.~Ghigi and G.P.~Pirola and S.~Torelli. \newblock
  Totally geodesic subvarities in the moduli space of curves.
  Preprint.{\em ArXiv:1902.06098.} \newblock 2019. To appear on {\em  Commun. Contemp. Math.}.   

\bibitem{gl} Ch.~Glei{\ss}ner, \newblock Threefolds Isogenous to a
  Product and Product quotient Threefolds with Canonical
  Singularities.
  \newblock PhD Dissertation. \newblock Bayreuth, 2016. \newblock
  \href{https://epub.uni-bayreuth.de/2981}{\texttt{https://epub.uni-bayreuth.de/2981}}.
  

\bibitem{gavino} G.~Gonz{\'a}lez~D{\'{\i}}ez and W.~J. Harvey.
  \newblock Moduli of {R}iemann surfaces with symmetry.  \newblock In
  {\em Discrete groups and geometry ({B}irmingham, 1991)},
  pp. 75--93. Cambridge Univ. Press, Cambridge, 1992.


\bibitem{gm2} S.~Grushevsky and M.~M\"oller.  \newblock Explicit
  formulas for infinitely many Shimura curves in genus 4.  \newblock
  {\em Asian J. Math.} 22 (2018), no. 2, 381-390.

\bibitem{kulkarni} R.S.~Kulkarni.  \newblock Riemann surfaces
  admitting large automorphism groups, \newblock In {\em Extremal
    Riemann surfaces (San Francisco, CA, 1995)}, Contemporary
  Mathematics, 201, pages 63--79, AMS, 1997.



\bibitem{lando-zvonkin} S.~K.~Lando and A.~K.~Zvonkin, \newblock {\em
    Graphs on {S}urfaces and {T}heir {A}pplications}, volume 141 of
  \newblock{\em Encyclopaedia of Mathematical Sciences}. \newblock
  Springer, \newblock 2004.



\bibitem{mozuo} A.~Mohajer and K.~Zuo.  \newblock On {S}himura
  subvarieties generated by families of abelian covers of
  {$\mathbb{P}^1$}.  \newblock {\em J. Pure Appl. Algebra},
  222(4):931--949, 2018.


\bibitem{moonen-special} B.~Moonen.  \newblock Special subvarieties
  arising from families of cyclic covers of the projective line.
  \newblock {\em Doc. Math.}, 15:793--819, 2010.


\bibitem{moonen-oort} B.~Moonen and F.~Oort.  \newblock The {T}orelli
  locus and special subvarieties.  \newblock In {\em {H}andbook of
    {M}{oduli: Volume II}}, pages 549--94.  International {P}ress,
  Boston, MA, 2013.

\bibitem{oort-can} F.~Oort.  \newblock Canonical liftings and dense
  sets of {CM}-points.  \newblock In {\em Arithmetic geometry
    ({C}ortona, 1994)}, Sympos. Math., XXXVII, pages
  228--234. Cambridge Univ. Press, Cambridge, 1997.

\bibitem{paulhus} J.~Paulhus. \newblock A database of group actions on Riemann surfaces. \newblock Preprint. 2017. \href{https://paulhus.math.grinnell.edu/Paulhus-lmfdb.pdf}{\texttt{https://paulhus.math.grinnell.edu/Paulhus-lmfdb.pdf}}
  

\bibitem{quasilarge} R.~Pignatelli, C.~Raso. 
\newblock Riemann surfaces with a quasi large abelian group of automorphisms. 
\newblock {\em Matematiche (Catania)}, 66 (2011), no. 2, 77--90.
  
  
\bibitem{rohde} J.~C. Rohde.  \newblock {\em Cyclic coverings,
    {C}alabi-{Y}au manifolds and complex multiplication}, volume 1975
  of {\em Lecture Notes in Mathematics}.  \newblock Springer-Verlag,
  Berlin, 2009.

\bibitem{swinarski} D.~Swinarski.
\href{https://faculty.fordham.edu/dswinarski/ivrg/}{  \texttt{https://faculty.fordham.edu/dswinarski/ivrg/}}


\bibitem{Tange2011a}
O.~Tange.
\newblock GNU parallel - The command-line power tool.
\newblock {\em ;login: The USENIX Magazine}, 36(1):42--47, Feb 2011.

\bibitem{zieschang} H.~Zieschang, E.~Vogt, and H.-D. Coldewey.
  \newblock {\em Surfaces and planar discontinuous groups}, {\em LNM} volume 835
  of {\em Lecture Notes in Mathematics}.  \newblock Springer, Berlin,
  1980.

\end{thebibliography}
\end{document}